\newtheorem{lemma}{Lemma}
\newtheorem{theorem}[lemma]{Theorem}
\newtheorem{corollary}[lemma]{Corollary}
\theoremstyle{remark}
\newtheorem{remark}[lemma]{Remark}
\theoremstyle{example}
\theoremstyle{definition}
\newtheorem{definition}[lemma]{Definition}
\newcommand{\CC}{\mathbb{C}}
\newcommand{\cS}{{\cal S}}
\newcommand{\cQ}{{\cal Q}}
\newcommand{\cP}{{\cal P}}
\newcommand{\wt}{\widetilde}
\newcommand{\la}{\lambda}
\newcounter{algo}[section]
\newcommand{\onetwo}[2]{\left[\begin{array}{cc} #1 & #2 \end{array}\right]}
\renewcommand{\leq}{\leqslant}
\renewcommand{\geq}{\geqslant}
\renewcommand{\theta}{\vartheta}
\DeclareMathOperator{\opvec}{vec}
\DeclareMathOperator{\re}{re}
\DeclareMathOperator{\im}{im}
\DeclareMathOperator{\rev}{rev}
\newcommand{\C}{\mathbb{C}}
\DeclarePairedDelimiter{\abs}{\lvert}{\rvert}
\title{Solvability and uniqueness criteria for generalized Sylvester-type equations\thanks{This work was partially
supported by the Ministerio de Econom\'{i}a y Competitividad of Spain
through grants MTM2015-68805-REDT, and MTM2015-65798-P  (F. De Ter\'an), and by an INdAM/GNCS Research Project 2016 (B.~Iannazzo, F.~Poloni, and L.~Robol). Part of this work was done during a visit of the first author to the Universit\`a di Perugia as a Visiting Researcher.}
}
\author[1]{Fernando De Ter\'an}
\author[2]{ Bruno Iannazzo}
\author[3]{Federico Poloni}
\author[4]{Leonardo Robol} 
\affil[1]{\footnotesize Departamento de Matem\'aticas, Universidad Carlos III de Madrid, Avda. Universidad 30, 28911 Legan\'es,
Spain. {\tt fteran@math.uc3m.es}. Corresponding author.}
\affil[2]{ Dipartimento di Matematica e Informatica, Universit\`a di Perugia, Via Vanvitelli 1, 06123 Perugia, Italy. {\tt bruno.iannazzo@dmi.unipg.it}.}
\affil[3]{ Dipartimento di Informatica, Universit\`a di Pisa, Largo B. Pontecorvo 3, 56127 Pisa, Italy. {\tt federico.poloni@unipi.it}.}
\affil[4]{ Dept. Computerwetenschappen, KU Leuven, Celestijnenlaan 200A, 3001 Heverlee (Leuven), Belgium. {\tt leonardo.robol@cs.kuleuven.be}.}
\begin{document}

\maketitle

\begin{abstract} 
We provide necessary and sufficient conditions for the generalized $\star$-Sylvester matrix equation, $AXB+CX^{\star}D=E$, to have exactly one solution for any right-hand side $E$. These conditions are given for arbitrary coefficient matrices $A,B,C,D$ (either square or rectangular) and generalize existing results for the same equation with square coefficients.
We also review the known results regarding the existence and uniqueness of solution for generalized Sylvester and $\star$-Sylvester equations.
\end{abstract}

\noindent{\bf Keywords.} Sylvester equation, 
eigenvalues, matrix pencil, matrix equation

\medskip

\noindent{\em AMS classification}: 15A22, 15A24, 65F15   

\section{Introduction}

We consider the {\em generalized $\star$-Sylvester equation} 
\begin{equation}\label{gensylv}
AXB+CX^\star D=E
\end{equation}
for the unknown $X\in\CC^{m\times n}$, with $\star$ being either the transpose ($\top$) or the conjugate transpose ($*$), and $A,B,C,D,E$ being matrices with appropriate sizes. We are interested in the most general situation, where both the coefficients and the unknown are allowed to be rectangular. This equation is closely related to the {\em generalized Sylvester equation}
 \begin{equation}\label{eq:gensylvr}
AXB-CXD=E,
\end{equation}
and equations \eqref{gensylv} and \eqref{eq:gensylvr} are natural extensions of the {\em$\star$-Sylvester equation} and the {\em Sylvester equation}, $AX+X^\star D=E$ and $AX-XD=E$, respectively.

Sylvester-like equations are among the most popular matrix equations, and they arise in many applications (see, for instance, \cite{bauapp,braapp,bk06,ksw09} and the recent review \cite{simoncini}). In particular, equations with rectangular coefficients arise in several eigenvalue perturbation and updating problems \cite{kmnt14,yuanapp}.

As with every class of equations, the two most natural questions regarding~\eqref{gensylv} and~\eqref{eq:gensylvr} are:
\begin{description}
 	\item[S (solvability).] Does the equation have a solution, for given $A,B,C,D,E$?
 	\item[US (unique solvability).] Does the equation have exactly one solution, for given $A,B,C,D,E$? 
\end{description} 

Moreover, three additional questions arise naturally for linear equations, due to their peculiar structure:
\begin{description}
	\item[SR (solvability for any right-hand side).] Given $A,B,C,D$, does the equation have \emph{at least} one solution for any choice of the right-hand side $E$? 
	\item[OR (at most one solution for any right-hand side).] Given $A,B,C,D$, does the equation have \emph{at most} one solution for any choice of the right-hand side $E$?
	\item[UR (unique solvability for any right-hand side).] Given $A,B,C,D$, does the equation have \emph{exactly} one solution for any choice of the right-hand side $E$?
\end{description}

Though the contribution of this paper is mainly restricted to question {\bf UR}, we present here an overview of the known results on all these problems for equations \eqref{gensylv} and \eqref{eq:gensylvr}. The main results regarding these closely related problems are scattered within the literature, so we gather all them in Table \ref{table:results} (the contents of this table are explained below). 

Using vectorizations, equations~\eqref{gensylv} and~\eqref{eq:gensylvr} can be transformed into a system of linear equations (either over the real or the complex field) of the form $Mx=e$, where $M$ depends only on the coefficients $A,B,C,D$, and $e$ depends only on the right-hand side $E$. This is clear for~\eqref{eq:gensylvr}, where $M = B^\top \otimes A - D^\top \otimes C$ (see e.g.~\cite[Section~4.3]{hj-topics}), and can be established with a little more effort for~\eqref{gensylv} (see Section~\ref{size-sec}). In this setting, question~\textbf{SR} is equivalent to asking whether $M$ has full row rank, and question~\textbf{OR} is equivalent to asking whether $M$ has full column rank. Question~\textbf{OR} is also equivalent to asking whether the homogeneous equation ($E=0$) has only the trivial solution $X=0$. In order for the answer of question~\textbf{UR} to be affirmative, $M$ must be square and, in this case, all \textbf{US}, \textbf{SR}, and \textbf{OR} are equivalent 
 to \textbf{UR}.

Instead of the conditions on the matrix $M$, in many cases it is possible to find conditions of a different kind, related to the spectral properties of smaller matrix pencils. For instance, let us consider question \textbf{UR} in the case in which all matrices are square and have the same size $m=n$: for Equation~\eqref{eq:gensylvr}, it has a positive answer if and only if the $n\times n$ matrix pencils $A - \la C$ and $D - \la B$ are regular and do not have common eigenvalues~\cite[Th. 1]{chu87}; and for Equation~\eqref{gensylv} the answer depends on spectral properties of the $2n\times 2n$ matrix pencil
\begin{equation}\label{pencil}
\cQ(\la)=\left[\begin{array}{cc}\la D^\star&B^\star\\A&\la C\end{array}\right]
\end{equation}
(see Theorem \ref{thm:di16} for a precise statement, or~\cite[Th. 15]{di16} for more details).
By contrast, $M$ has size $n^2$ or $2n^2$. Even for equations with rectangular coefficients, the picture is the same: the characterizations that do not involve vectorization lead to matrix pencils with smaller size, of the same order as the coefficient matrices, while approaches based on Kronecker products lead to larger dimensions.

Several authors have given conditions of this kind: in Table~\ref{table:results}, we show an overview of these results. In this table we consider separately the cases where the coefficient matrices $A,B,C,D$ are square, and the more general case where they have arbitrary size (as long as the product is well-defined). This distinction fits with the historical flow of the problem, as can be seen in the table. Note that, in both \eqref{gensylv} and \eqref{eq:gensylvr}, if the coefficient matrices $A,B,C,D$ are square then the coefficient matrix $M$ of the associated linear system is square as well. However, there is a difference between \eqref{gensylv} and \eqref{eq:gensylvr} regarding this issue: whereas in \eqref{eq:gensylvr} it may happen that all $A,B,C,D$ are square but $A$ and $B$ have different size, in \eqref{gensylv} if all coefficient matrices are square, then they must all have the same size in order for the products to be well-defined.

\begin{table}
\centering
\begin{tabular}{c|cc|ccc}
\toprule
&\multicolumn{2}{c|}{$AXB-CXD=E$} & \multicolumn{2}{c}{$AXB+CX^\star D=E$}\\
&\begin{tabular}{c}square \\coefficients\end{tabular} & \begin{tabular}{c}general\\ coefficients\end{tabular}& \begin{tabular}{c}square \\coefficients\end{tabular} & \begin{tabular}{c}general\\ coefficients\end{tabular}\\
\midrule\textbf{S} & \cite[Th. 6.1]{dk16} & \cite[Th. 6.1]{dk16}, \cite[Th. 1]{kosir-tech} & \cite[Th. 6.1]{dk16} & \cite[Th. 6.1]{dk16}\\
\textbf{US} & \cite[Th. 1]{chu87} & \cite[Th. 1]{kosir-tech}
  & \cite[Th. 15]{di16} & open \\
\textbf{SR} & same as~\textbf{US} &Th.~\ref{thm:kosir_characterization} (using~\cite{kosir-tech})
 & same as~\textbf{US} & open \\
\textbf{OR} & same as~\textbf{US} &\cite[Cor. 5]{kosir},Th.~\ref{thm:kosir_characterization} (using~\cite{kosir-tech})& same as~\textbf{US} & open \\
\textbf{UR} & same as~\textbf{US} & Th.~\ref{thm:kosir_characterization} (using~\cite{kosir-tech})& same as~\textbf{US} & Th.~\ref{thm:main} \\
\bottomrule
\end{tabular}
\caption{\footnotesize Existing solvability and uniqueness results for Equations~\eqref{gensylv} and~\eqref{eq:gensylvr} in terms of ``small-size'' matrices and pencils.} \label{table:results}
\end{table}

The solution of Equation~\eqref{eq:gensylvr}, allowing for rectangular coefficients, had been considered in \cite{mitra}, where an approach through the Kronecker canonical form of $A-\la C$ and $D-\la B$ was proposed. However, no explicit characterization of the uniqueness of solution was given in that reference. Also, \cite[Th. 3.2]{kmnt14} presents a computation of the solution space of \eqref{eq:gensylvr} with $B=I$, depending on the Kronecker canonical form of $A-\la C$, but restricted to the case where this canonical form does not contain right singular blocks. In \cite{rozsa}, the author identifies the correct line of attack of the problem, while a complete analysis of the solution space of Equation \eqref{eq:gensylvr} is given by Ko\v{s}ir in \cite{kosir-tech}; the same author gives an explicit answer to question~\textbf{SR} in~\cite{kosir}. Answers to~\textbf{OR} and~\textbf{UR} follow from Ko\v{s}ir's work \cite{kosir-tech}, but they are not explicitly stated there; for completeness, we formulate them in Section~\ref{sec:kosir_criteria}.

Instead, for the $\star$-Sylvester equation~\eqref{gensylv}, to the best of our knowledge, several problems are still open; i.e., only characterizations based on vectorization and Kronecker products are known. The main goal of this paper is to give a solution to~\textbf{UR} in the most general case of rectangular coefficients: we give necessary and sufficient conditions for the unique solvability of~\eqref{gensylv} in terms of the pencil $\mathcal{Q}(\lambda)$ in~\eqref{pencil}.

Our focus on question~\textbf{UR} is motivated by the fact that this is the only case where the operator $X\mapsto AXB+CX^\star D$ is invertible. Moreover, this is the only case where the solution of the equation is a well-posed problem, since the (unique) solution depends continuously on the (entries of the) coefficient matrices. This is no longer true in the remaining cases.

We emphasize that the approach followed in \cite{di16} for square coefficients cannot be applied in a straightforward manner to the rectangular case. Indeed, that approach is based on the characterization of the uniqueness of solution of the $\star$-Sylvester equation $AX+X^\star D=0$ provided in \cite{bk06,ksw09}, which is valid only for $A,D,X\in\CC^{n\times n}$. We follow a different approach that allows us to extend that characterization to the case of rectangular coefficients.

The most interesting feature of our characterization is the appearance of an additional invertibility constraint that is not present in the square case. 
Indeed, the unique solvability of~\eqref{gensylv} cannot be characterized completely in terms of the eigenvalues of $\mathcal{Q}(\lambda)$ only, as we show with a counterexample in Section~\ref{sec:counterexample}.

\subsection{The main result}\label{sec:mainth}

In this section we state the main result of this paper, namely Theorem \ref{thm:main}. The rest of the paper is devoted to prove this result, but, before its statement, we introduce some notation and tools. 

Throughout the paper we denote by $I$ the identity matrix of appropriate size, and by $M^{-\star}$ we denote the inverse of the matrix $M^\star$, for an invertible matrix $M$.

A matrix pencil $\cP(\la)=\la M+N$ is said to be singular if either $\cP(\la)$ is rectangular or $q(\la):=\det\bigl(\cP(\la)\bigr)$ is identically zero. If $\cP(\la)$ is not singular, and so $M,N$ are $n 
\times n$ matrices, then it is said to be regular and the set of roots of $q(\la)$, complemented with $\infty$ if the degree of $q(\la)$ is less than $n$, is the spectrum of $\cP$, denoted by $\Lambda(\cP)$.
With $m_\la(\cP)$ we denote the algebraic multiplicity of the eigenvalue $\la$ in $\cP$. 

We shall deal with certain matrices and matrix pencils that always have $\abs{m-n}$ zero or infinite eigenvalues which are \emph{dimension-induced}, that is, they are present simply because of the sizes of the coefficient matrices they are constructed from (see~\cite{sergeichuk2004computation}). Hence we define a variant of the spectrum in which these eigenvalues are omitted:
\[
\widehat \Lambda(\cP):=\left\{\begin{array}{cc}\Lambda(\cP),&\mbox{if $m_\infty(\cP)> |m-n|$,}\\ \Lambda(\cP)\setminus\{\infty\},&\mbox{if $m_\infty(\cP)=|m-n|$,}\end{array}\right.
\]
\[
  \widetilde \Lambda(\cP):=\left\{\begin{array}{cc}\Lambda(\cP),&\mbox{if $m_0(\cP)> |m-n|,$}\\ \Lambda(\cP)\setminus\{0\},&\mbox{if $m_0(\cP)=|m-n|.$}\end{array}\right.
\]
Following~\cite{sergeichuk2004computation}, we refer to the
eigenvalues in either $\widehat \Lambda(\cP)$ or
$\widetilde \Lambda (\cP)$ as~\emph{core eigenvalues}.

The {\em reversal pencil} of the matrix pencil $\cP(\la)=\la M+N$ is the pencil $\rev \cP(\la):=\la N+M$. The pencil $\cP(\la)$ has an infinite eigenvalue if and only if $\rev \cP(\la)$ has the zero eigenvalue. The multiplicity 
of the infinite eigenvalue in $\cP(\la)$ is the multiplicity of the zero eigenvalue in $\rev \cP(\la)$, thus
\begin{equation}\label{eq:rev}
\wt \Lambda(\rev\mathcal P)=\left\{\la^{-1}\ :\ \la\in\widehat\Lambda(\mathcal P)\right\},
\end{equation}
with $0^{-1}=\infty$ and $\infty^{-1}=0$.

By $\la^\star$ we denote either $\la$, if $\star=\top$, or $\overline \la$, if $\star=*$, with $\overline \la$ being the complex conjugate of $\la$.

If $M$ is a square matrix, by $\Lambda(M)$ and $m_\la(M)$ we denote, respectively, the spectrum of $M$ and the algebraic multiplicity of $\la$ as an eigenvalue of $M$. Furthermore, we use $\widetilde \Lambda(M)$ to denote $\widetilde \Lambda(\lambda I - M)$.

We also recall the following notion, which plays a central role in Theorem \ref{thm:main}. 

\begin{definition}\label{def:reciprocal} {\rm(Reciprocal free and $*$-reciprocal free set) \cite{bk06,ksw09}}. Let $\cS$ be a subset of $\CC\cup\{\infty\}$. We say that $\cS$ is 
\begin{itemize} 
\item[{\rm(a)}] {\rm reciprocal free} if $\la\neq \mu^{-1},$ for all $\la,\mu\in \cS$; 
\item[{\rm(b)}]  {\rm $*$-reciprocal free} if $\la\neq (\overline \mu)^{-1},$ for all $\la,\mu\in \cS$.
\end{itemize}
This definition includes the values $\la=0,\infty$, with the customary assumption $\la^{-1}=(\overline\la)^{-1}=\infty,0$, respectively.
\end{definition}

Before stating the characterization of the uniqueness of solution in the general case,
we recall here the main result in \cite{di16}, namely the characterization of the uniqueness of solution of \eqref{gensylv} when all coefficients are square and have the same size.

\begin{theorem}\label{thm:di16}{\rm\cite[Th. 15]{di16}}
Let $A,B,C,D\in\CC^{n\times n}$ and let 
$\cQ(\la)=\left[\begin{smallmatrix} \la D^\star & B^\star\\A & \la C\end{smallmatrix}\right]$.
Then the equation 
$AXB+CX^\star D= E$
has a unique solution, for any right-hand side $E$, if and only if $\cQ(\la)$ is regular and:
\begin{itemize} 
\item If $\star=\top$, $\Lambda(\cQ)\setminus\{\pm1\}$ is reciprocal free and $m_1(\cQ)=m_{-1}(\cQ)\leq1$.
\item If $\star=*$, $\Lambda(\cQ)$ is $*$-reciprocal free.
\end{itemize}
\end{theorem}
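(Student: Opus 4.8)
The plan is to recast question \textbf{UR} as a kernel computation and then read the spectral conditions on $\cQ(\la)$ off the structure of that kernel. Since $A,B,C,D$ are all $n\times n$, the map $\mathcal{L}\colon X\mapsto AXB+CX^\star D$ is an $\RR$-linear endomorphism of $\CC^{n\times n}$, so $\mathcal{L}$ is bijective exactly when it is injective; hence \textbf{UR} is equivalent to the statement that the homogeneous equation $AXB+CX^\star D=0$ has only the trivial solution. I would first dispose of regularity: if $\cQ(\la)$ is singular it carries a nonzero rational null vector $\left[\begin{smallmatrix}u(\la)\\ v(\la)\end{smallmatrix}\right]$, and I would show that the blocks of such a vector assemble into a nonzero matrix solving the homogeneous equation, so regularity of $\cQ(\la)$ is necessary and from then on $\Lambda(\cQ)$ and the multiplicities $m_\la(\cQ)$ are well defined.

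The conceptual engine is to \emph{couple} the homogeneous equation with its $\star$-conjugate. Applying $\star$ to $AXB+CX^\star D=0$ gives $D^\star XC^\star+B^\star X^\star A^\star=0$, so, setting $Y:=X^\star$, every homogeneous solution yields a pair $(X,Y)$ solving the decoupled generalized Sylvester system
\[
AXB+CYD=0,\qquad D^\star XC^\star+B^\star YA^\star=0.
\]
The solution space $\mathcal{W}$ of this system carries the involution $\sigma(X,Y)=(Y^\star,X^\star)$, whose fixed points are exactly the pairs $(X,X^\star)$ with $X$ solving the original equation; thus \textbf{UR} holds iff $\mathcal{W}$ has no nonzero $\sigma$-fixed point. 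The decoupled system is itself a (non-$\star$) generalized Sylvester problem, governed by $\cQ(\la)$ and a companion pencil $\mathcal{R}(\la)$ which is the $\star$-reversal of $\cQ$, so that $\Lambda(\mathcal{R})$ is the reciprocal (for $\star=\top$) or $*$-reciprocal (for $\star=*$) image of $\Lambda(\cQ)$. Feeding these two pencils into the solvability mechanism behind \cite[Th.~1]{chu87}, $\mathcal{W}$ is nontrivial precisely when $\cQ$ and $\mathcal{R}$ share an eigenvalue, i.e. when some $\la,\mu\in\Lambda(\cQ)$ satisfy $\la=\mu^{-1}$ (resp. $\la=(\overline\mu)^{-1}$). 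This is exactly the failure of the (resp. $*$-)reciprocal-free condition of Definition~\ref{def:reciprocal}, which accounts for the non-self-reciprocal part of the spectrum in Theorem~\ref{thm:di16}.

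The crux is the bookkeeping at the \emph{self-reciprocal} eigenvalues, i.e. $\la=\pm1$ when $\star=\top$ and $|\la|=1$ when $\star=*$. There the coupled space $\mathcal{W}$ is unavoidably nontrivial, but the original equation only sees its $\sigma$-fixed part, and whether that part is nonzero is decided by the real structure $\sigma$ induces on the relevant (generalized) eigenspace: a symmetric bilinear form when $\star=\top$ and a Hermitian form when $\star=*$. This is where the two cases split: for $\star=*$ every unit-circle eigenvalue forces a fixed vector and must be excluded, which is already built into $*$-reciprocal freeness; for $\star=\top$ a single Jordan block at $+1$ can be balanced against a single block at $-1$, producing the refined clause $m_1(\cQ)=m_{-1}(\cQ)\le1$ (the values $\pm1$ being the two square roots of the single self-reciprocal point of the reduced problem). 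Rather than redo this Jordan-level analysis, I would import it: when $B$ and $C$ are invertible, left/right multiplication by $C^{-1}$ and $B^{-1}$ reduces $AXB+CX^\star D=0$ to the standard $\star$-Sylvester equation $C^{-1}AX+X^\star DB^{-1}=0$, for which the uniqueness criterion of \cite{bk06,ksw09} applies; matching its pencil to $\cQ(\la)$ then transcribes its conditions verbatim into the statement of Theorem~\ref{thm:di16}.

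The main obstacle is therefore twofold. First, the clean reduction to a standard $\star$-Sylvester equation needs $B,C$ (equivalently the corresponding blocks of $\cQ$) invertible; the genuinely singular square case must be handled by first deflating the $0$ and $\infty$ eigenvalues of $\cQ$ via its Kronecker/Weierstrass form and checking that this deflation is compatible with the involution $\sigma$, so that neither the count in $\mathcal{W}$ nor the fixed-point count is corrupted. Second, the multiplicity clause at $\pm1$ for $\star=\top$ requires tracking not merely algebraic multiplicities but the full Jordan structure of $\cQ$ at these points and its interaction with the induced symmetric form; this is where all the delicacy of the $m_1(\cQ)=m_{-1}(\cQ)\le1$ condition resides, and it is the step I expect to demand the most care.
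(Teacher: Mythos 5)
First, a point of reference: the paper does not prove this statement. Theorem~\ref{thm:di16} is quoted verbatim from \cite[Th.~15]{di16} and used as a black box (it is the base case $p=m=n=q$ of Theorem~\ref{thm:main}), so your attempt can only be measured against the strategy of \cite{di16}, which the introduction describes as resting on the uniqueness criterion for $AX+X^\star D=0$ from \cite{bk06,ksw09}. Your final fallback --- invert $C$ and $B$, reduce to $C^{-1}AX+X^\star DB^{-1}=0$, and transcribe the \cite{bk06,ksw09} conditions through the identity $\det\cQ(\la)=\det(C)\det\bigl(\la^2D^\star-B^\star C^{-1}A\bigr)$ --- is essentially that route, and your translation of the small-pencil conditions into the $\pm1$ conditions on $\cQ$ (square roots doubling the self-reciprocal point) is the right bookkeeping.

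There are, however, two genuine gaps. (1) The case where $B$ or $C$ is singular is not handled; you only gesture at ``deflating the $0$ and $\infty$ eigenvalues of $\cQ$ compatibly with $\sigma$.'' This case cannot be excluded: already for $n=1$, $\star=\top$, the equation $axb+cxd=e$ with $b=0$ and $cd\neq0$ is uniquely solvable, and its pencil has $\Lambda(\cQ)=\{0,0\}$, which satisfies all the stated conditions. A complete proof must therefore cover singular $B,C$, and showing that the deflation preserves both the solution count and the fixed-point structure of $\sigma$ is precisely the nontrivial content you have deferred. (2) The assertion that the coupled system $AXB+CYD=0$, $D^\star XC^\star+B^\star YA^\star=0$ has a nontrivial solution exactly when $\cQ$ and its reciprocal pencil share an eigenvalue is justified only by appeal to ``the mechanism behind'' \cite[Th.~1]{chu87}; but that theorem concerns a single equation $AXB-CXD=E$ governed by the pencils $A-\la C$ and $D-\la B$, not a coupled pair in two unknowns, and the reduction is not supplied. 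Worse, for $\star=\top$ a nonzero element of the coupled solution space does not force non-uniqueness of the original equation: the $\sigma$-antisymmetric part $(X,-X^\top)$ solves $AXB-CX^\top D=0$ instead (take $n=1$ with $ab=cd\neq0$: the coupled space is nontrivial, yet the original equation is uniquely solvable, consistently with $m_1(\cQ)=m_{-1}(\cQ)=1$). So the entire weight of the theorem rests on the fixed-point/Jordan analysis at the self-reciprocal eigenvalues, which you explicitly do not carry out but import. As it stands, the proposal is an accurate map of where the difficulties lie rather than a proof.
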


If we allow for rectangular coefficient matrices, that is
$A\in\C^{p\times m}, B\in\C^{n\times q}, C\in\C^{p\times
  n},D\in\C^{m\times q}$, then several subtleties arise, and in the
end, they will result in additional restrictions on the pencil
$\cQ(\la)$. More precisely, we will prove that the only case in which
unique solvability arises is when $(p,q)\in\{(m,n),(n,m)\}$. In
the case where $p=m$, that is,
$A\in\CC^{m\times m},B\in\CC^{n\times n},C\in\CC^{m\times
  n},D\in\CC^{m\times n}$, the spectrum of the matrix pencil
\eqref{pencil} contains the infinite eigenvalue, with multiplicity at
least $|m-n|$. Then, in this case we denote by $\widehat \Lambda(\cQ)$
the following set obtained from $\Lambda(\cQ)$:
\[
\widehat \Lambda(\cQ):=\left\{\begin{array}{cc}\Lambda(\cQ),&\mbox{if $m_\infty(\cQ)> |m-n|$,}\\ \Lambda(\cQ)\setminus\{\infty\},&\mbox{if $m_\infty(\cQ)=|m-n|$.}\end{array}\right.
\]
If $p=n$, that is
$A\in\CC^{n\times m},B\in\CC^{n\times m},C\in\CC^{m\times m},D\in\CC^{n\times n}$, the spectrum of the matrix pencil \eqref{pencil} contains the zero eigenvalue, with multiplicity at least $|m-n|$. Then, in this case we denote by $\widetilde \Lambda(\cQ)$ the following set obtained from $\Lambda(\cQ)$:
\[
\widetilde \Lambda(\cQ):=\left\{\begin{array}{cc}\Lambda(\cQ),&\mbox{if $m_0(\cQ)> |m-n|,$}\\ \Lambda(Q)\setminus\{0\},&\mbox{if $m_0(\cQ)=|m-n|.$}\end{array}\right.
\]

The presence of these additional zero/infinity eigenvalues of $\cQ(\la)$ in \eqref{pencil} is due to the ``rectangularity" of either the diagonal blocks $C,D$ or the anti-diagonal blocks $A$ and $B$. Following \cite{granat2007matlab}, based on the theory developed
in \cite{sergeichuk2004computation}, these extra zero/infinity eigenvalues are called \emph{dimension induced} eigenvalues. The sets $\widehat \Lambda(\cQ)$ and $\widetilde \Lambda(\cQ)$ are referred to as the set of \emph{core eigenvalues}.

With these considerations in mind, we can state the main result of this paper, which is an extension of Theorem \ref{thm:di16} and which will be proved in Section~\ref{sec:starsylv}.

\begin{theorem}\label{thm:main}
Let $A\in\CC^{p\times m},B\in\CC^{n\times q},C\in\CC^{p\times n},$ and $D\in\CC^{m\times q}$ and set $\cQ(\la):=\left[\begin{smallmatrix} \la D^\star & B^\star\\A & \la C\end{smallmatrix}\right]$. The equation 
\[AXB+CX^\star D= E\]
has a unique solution, for any right-hand side $E$, if and only if $\cQ(\la)$ is regular and one of the following situations holds:

\begin{itemize} 
\item[{\rm(1)}] $p=m \neq n=q$, either $m < n$ and $A$ is invertible or 
  $m > n$ and $B$ is invertible, and 
\begin{itemize} 
\item If $\star=\top$, $\widehat\Lambda(\cQ)\setminus\{\pm1\}$ is reciprocal free and $m_1(\cQ)=m_{-1}(\cQ)\leq1$.
\item If $\star=*$, $\widehat\Lambda(\cQ)$ is $*$-reciprocal free.
\end{itemize}

\item[{\rm(2)}] $p=n\neq m=q$, either $m > n$ and $C$ is invertible
  or $m < n$ and $D$ is invertible, and
\begin{itemize} 
\item If $\star=\top$, $\widetilde\Lambda(\cQ)\setminus\{\pm1\}$ is reciprocal free and $m_1(\cQ)=m_{-1}(\cQ)\leq1$.
\item If $\star=*$, $\widetilde\Lambda(\cQ)$ is $*$-reciprocal free.
\end{itemize}

\item[{\rm (3)}] $p=m=n=q$, and 
\begin{itemize} 
\item If $\star=\top$, $\Lambda(\cQ)\setminus\{\pm1\}$ is reciprocal free and $m_1(\cQ)=m_{-1}(\cQ)\leq1$.
\item If $\star=*$, $\Lambda(\cQ)$ is $*$-reciprocal free.
\end{itemize}
\end{itemize}

\end{theorem}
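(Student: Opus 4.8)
The plan is to recast \textbf{UR} as a statement about the $\RR$-linear operator $T\colon\CC^{m\times n}\to\CC^{p\times q}$, $T(X)=AXB+CX^\star D$, and then to read off injectivity from spectral data of $\cQ(\la)$. Since $T$ maps a space of real dimension $2mn$ into one of real dimension $2pq$, and a linear map between equidimensional spaces is bijective iff it is injective, the first reduction is: \textbf{UR} holds iff $mn=pq$ and the homogeneous equation $AXB+CX^\star D=0$ has only the trivial solution. (For $\star=\top$ the operator is $\CC$-linear, for $\star=*$ only $\RR$-linear, but this dichotomy will only affect whether the relevant symmetry is the reciprocal or the conjugate-reciprocal one.) The problem thus splits cleanly into a \emph{size analysis} and a \emph{kernel analysis}.

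For the size analysis I would combine two facts. First, bijectivity forces $mn=pq$. Second, $\cQ(\la)$ being regular forces it to be square, i.e. $p+q=m+n$; together these give $\{p,q\}=\{m,n\}$, which is exactly the trichotomy (1)--(3). To see that squareness of $\cQ$ is genuinely \emph{necessary} for \textbf{UR}, I would argue contrapositively, using that $\operatorname{im}T\subseteq V_1+V_2$ with $V_1=\{AXB\}$ and $V_2=\{CX^\star D\}$, where $V_1$ consists of the matrices with column space in $\operatorname{im}A$ and row space in $\operatorname{im}B^\top$ (so $\dim V_1=\operatorname{rank}A\cdot\operatorname{rank}B$), and similarly for $V_2$. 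When $\{p,q\}\neq\{m,n\}$ a rank-and-overlap count on $\dim(V_1+V_2)=\dim V_1+\dim V_2-\dim(V_1\cap V_2)$ shows this is always a proper subspace of $\CC^{p\times q}$, so $T$ cannot be onto; this is precisely the mechanism made visible by the counterexample of Section~\ref{sec:counterexample}. At this stage I would also locate the origin of the invertibility hypotheses: when $m\neq n$ the pencil $\cQ$ carries $|m-n|$ \emph{dimension-induced} eigenvalues at $\infty$ (case~1) or $0$ (case~2), and requiring the smaller of the two square blocks ($B$ or $A$ in case~1, $C$ or $D$ in case~2) to be invertible is exactly the condition that $\cQ$ has no further degenerate (singular) Kronecker structure beyond these; when it fails one exhibits a kernel element of $T$ directly, independently of $\Lambda(\cQ)$, which is why this constraint cannot be folded into the spectral conditions.

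The core of the argument is the kernel analysis, and this is where I expect the main difficulty. The idea is to couple the homogeneous equation with its $\star$-conjugate $D^\star X C^\star+B^\star X^\star A^\star=0$ so as to eliminate the nonlinearity: setting $Y=X^\star$ and temporarily treating $X,Y$ as independent, the pair $(X,Y)$ solves a generalized Sylvester system \emph{without} $\star$ whose coefficient pencil is block-equivalent to $\cQ(\la)$, and the solutions of the original equation are exactly the fixed points of the involution $\sigma\colon(X,Y)\mapsto(Y^\star,X^\star)$ on this enlarged solution set (note $\sigma$ preserves the set because the second equation is the $\star$ of the first). The reversal/$\star$ symmetry of $\cQ$ pairs each eigenvalue $\la$ with $\la^{-1}$ (for $\top$) or $\overline\la^{\,-1}$ (for $*$), and $\sigma$ permutes the associated deflating subspaces accordingly; a nonzero fixed point, i.e. a genuine solution $X\neq0$, can be assembled precisely when a reciprocal (resp.\ conjugate-reciprocal) pair is present in the \emph{core} spectrum $\wh\Lambda(\cQ)$ or $\wt\Lambda(\cQ)$, or when the self-paired eigenvalues occur with too high a multiplicity. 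Over $\top$ the only self-reciprocal points are $\pm1$, which is why they receive the separate treatment $m_1(\cQ)=m_{-1}(\cQ)\leq1$; over $*$ the self-conjugate-reciprocal set is the whole unit circle, and $*$-reciprocal-freeness already forbids all of it, so no extra multiplicity clause is needed. This step generalizes Theorem~\ref{thm:di16} and the square results of~\cite{bk06,ksw09}, and the delicate bookkeeping---separating dimension-induced from core eigenvalues, tracking multiplicities through $\sigma$, and handling the $\pm1$ blocks---is the main obstacle, since the square-case tools do not apply verbatim to rectangular coefficients.

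Finally I would assemble the characterization case by case. In case~3 ($m=n$) no dimension-induced eigenvalues appear, $\wh\Lambda=\wt\Lambda=\Lambda$, and the statement collapses to Theorem~\ref{thm:di16}. In cases~1 and~2 I would use the invertibility hypothesis to excise the $|m-n|$ dimension-induced eigenvalues---which is exactly what replacing $\Lambda(\cQ)$ by $\wh\Lambda(\cQ)$ or $\wt\Lambda(\cQ)$ accomplishes---reducing the rectangular problem to an essentially square $\star$-Sylvester problem of size $\min(m,n)$, to which the kernel analysis applies. Carrying this out for both $\star=\top$ and $\star=*$, and checking that the (conjugate-)reciprocal-free condition on the core spectrum together with the $\pm1$ multiplicity clause is simultaneously necessary and sufficient for triviality of $\ker T$ under the dimension constraint $mn=pq$, completes the proof.
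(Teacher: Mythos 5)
Your skeleton matches the paper's at the top level (reduce to $mn=pq$ and to the homogeneous equation \eqref{hom}; kill the fully rectangular case $p\notin\{m,n\}$; reduce the remaining cases to a square problem), but the central step is left unproved, and the route you sketch for it is the one the paper explicitly rejects. You propose to decouple $X$ and $Y=X^\star$, analyse the resulting $\star$-free Sylvester system through deflating subspaces of $\cQ(\la)$, and recover solutions of \eqref{hom} as fixed points of the involution $(X,Y)\mapsto(Y^\star,X^\star)$; you then flag the bookkeeping as ``the main obstacle'' without resolving it. That is essentially the method of \cite{bk06,ksw09}, which the paper states is valid only for square coefficients and cannot be applied in a straightforward manner here. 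The paper's actual mechanism is elementary and explicit: for $p=m>n=q$ with $B$ invertible, multiply by $B^{-1}$, compress $DB^{-1}$ as in \eqref{qd}, set $Y=Q^{-\star}X$, show that the block $\left[\begin{smallmatrix}\widetilde A_{12}\\ \widetilde A_{22}\end{smallmatrix}\right]$ must have full column rank (otherwise a nonzero solution of \eqref{hom} is exhibited directly), and row-reduce to the genuinely square $n\times n$ equation \eqref{redsylv}; the accompanying equivalence \eqref{product-q} shows that this reduction deflates exactly the $m-n$ dimension-induced infinite eigenvalues, so $\Lambda(\widehat\cQ)=\widehat\Lambda(\cQ)$ and Theorem~\ref{thm:di16} can be invoked as a black box. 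Your final paragraph gestures at ``reducing to an essentially square problem of size $\min(m,n)$,'' which is the right idea, but no mechanism for that reduction is supplied, and without one the proof does not close.

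Two further concrete errors. First, your claim that invertibility of $B$ (resp.\ $A$, $C$, $D$) ``is exactly the condition that $\cQ$ has no further degenerate (singular) Kronecker structure'' is false: the two equations in Section~\ref{sec:counterexample} have pencils $\cQ_1,\cQ_2$ that are both regular and coincide up to row and column permutations, hence have identical Kronecker structure, yet one has $B$ singular and infinitely many solutions while the other has $B$ invertible and a unique solution. The whole point of that example is that the invertibility hypothesis cannot be encoded in the Kronecker form of $\cQ(\la)$. Second, your non-surjectivity argument for $p\notin\{m,n\}$ via a dimension count of $V_1+V_2$ is not routine: in the case $m<p<n$ one can have $\dim V_1+\dim V_2>pq$ (for instance $m=2$, $n=6$, $p=3$, $q=4$ gives the bound $mq+pm=14>12=pq$), so everything rests on the intersection term, which you do not compute. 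The paper sidesteps this in Lemma~\ref{lem:rectangular} by exhibiting either an explicit nonzero kernel element $uv^\star$ (when $p<\min\{m,n\}$ or $p>\max\{m,n\}$) or an explicit left null vector of the vectorized coefficient matrix, i.e.\ an annihilating functional $E\mapsto u^\top Ev$ with $u^\top A=0$ and $Dv=0$. These last two points are fixable, but the missing reduction to the square case is a genuine gap.
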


\subsection{Vectorization}\label{size-sec}

Equation \eqref{gensylv} can be considered as a linear system in the entries of the unknown matrix $X$. The natural approach to get such system is applying the vectorization (vec) operator \cite[\S4.3]{hj-topics}. 

Set $X\in\CC^{m\times n}$, and let $A\in\CC^{p\times m},B\in\CC^{n\times q},C\in\CC^{p\times n},D\in\CC^{m\times q},E\in\CC^{p\times q}$. In the case $\star=\top$, after applying the vec operator we obtain a linear equation $M \opvec(X)=\opvec(E)$, with $M\in\CC^{(pq)\times (mn)}$ given by
\begin{equation}\label{m}
M=B^\top \otimes A+(D^\top\otimes C)\Pi,
\end{equation}
where $\Pi$ is a permutation matrix associated with the transposition~\cite[Equation~4.3.9b]{hj-topics}. 

In the case $\star=*$, some more care is needed, since the system obtained by vectorization is not linear over $\mathbb{C}$, due to the presence of conjugations. Nevertheless, we can separate real and imaginary parts as in \cite[\S1.1]{di16} and write it as a linear system over $\mathbb R$ of size $(2pq)\times(2mn)$ in $Y= \opvec(\onetwo {\re (X)}{\im (X)})$. 

The fact that Equation~\eqref{gensylv} is equivalent to a linear system has two important consequences. The first one is that~\eqref{gensylv} can have a unique solution, for any right-hand-side, only if the coefficient matrix of the linear system is square, that is, $mn=pq$. The second one is that, provided that $pq=mn$, the uniqueness of solution does not depend on the right-hand side: Equation \eqref{gensylv} has a unique solution for any $E$ if and only if the corresponding homogeneous equation
\begin{equation}\label{hom}
AXB+CX^\star D=0
\end{equation}
has only the trivial solution $X=0$. Hence, from now on, we assume $mn=pq$ and
we focus on Equation \eqref{hom} instead of Equation \eqref{gensylv}.

A different reformulation of the $\star=*$ case as a linear system (in the homogeneous case) is the following.
\begin{lemma}\label{lem:conjcase}
Equation \eqref{hom}, with $\star=*$, has a unique solution if and only if the linear system of equations
\begin{equation}\label{bigsystem}
\begin{split} 
AXB+CYD&=0,\\
D^*XC^*+B^*YA^*&=0,
\end{split}
\end{equation}
has a unique solution.
\end{lemma}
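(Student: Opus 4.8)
The plan is to reformulate the $\star = *$ equation, which involves conjugation and is therefore only $\mathbb{R}$-linear, as a genuinely $\mathbb{C}$-linear system by introducing an auxiliary unknown $Y$ that stands for $\overline{X}$. The key observation is that Equation~\eqref{hom} with $\star = *$ reads $AXB + CX^*D = 0$, and since $X^* = \overline{X}^\top = \overline{X^\top}$, conjugating the whole equation produces a second relation tying $X$ and $\overline{X}$ together. The idea is to set $Y := \overline{X}$ and show that the pair $(X,Y)$ must satisfy the coupled system~\eqref{bigsystem}, and conversely that any solution of~\eqref{bigsystem} with the constraint $Y = \overline{X}$ solves~\eqref{hom}. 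The technical heart is to verify that the extra solutions of~\eqref{bigsystem} not satisfying $Y = \overline{X}$ do not spoil the equivalence of the \emph{uniqueness} statements.

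\medskip

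First I would derive the two equations in~\eqref{bigsystem} from~\eqref{hom}. The first equation is just~\eqref{hom} itself after substituting $Y = \overline{X}$, using $X^* = \overline{X}^\top = Y^\top$; but to stay $\mathbb{C}$-linear one rewrites $AXB + CX^*D = 0$ as $AXB + C Y^\top D = 0$ with $Y=\overline X$. To get the stated form $AXB + CYD = 0$ one takes conjugate transposes appropriately, so some care with which equation is which is needed. The second equation of~\eqref{bigsystem}, namely $D^*XC^* + B^*YA^* = 0$, is obtained by applying the conjugate-transpose operator $(\cdot)^*$ to the first equation and relabeling: starting from $AXB + CYD = 0$, taking $(\cdot)^*$ gives $B^*X^*A^* + D^*Y^*C^* = 0$, and then using $Y = \overline{X}$, hence $Y^* = X^\top$ and $X^* = Y^\top$, one massages this into the second row. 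I would write out these substitutions explicitly and carefully, since getting the placement of $B^*,A^*,C^*,D^*$ and of $X,Y$ correct is exactly the routine-but-error-prone part.

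\medskip

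Next I would establish the equivalence of uniqueness. Let $\mathcal{L}(X) := AXB + CX^*D$ be the $\mathbb{R}$-linear map whose kernel we study, and let $\mathcal{T}(X,Y)$ denote the $\mathbb{C}$-linear map defined by the left-hand side of~\eqref{bigsystem}. The forward direction is: if $(X,Y)$ solves~\eqref{bigsystem}, I claim $(Y^\top, X^\top)$ or some such conjugate/transpose partner also solves it, by the symmetry of the system under $(X,Y)\mapsto(\overline Y,\overline X)$ — this symmetry is what the second equation was built to guarantee. Consequently the solution space of~\eqref{bigsystem} is invariant under this involution, so it decomposes into symmetric and antisymmetric parts, and the symmetric part $Y = \overline{X}$ is precisely in bijection (over $\mathbb{R}$) with the kernel of $\mathcal{L}$. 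I would then argue that $\mathcal{T}$ is injective if and only if $\mathcal{L}$ is injective: if $\mathcal{L}(X) = 0$ has only $X = 0$, then any solution $(X,Y)$ of~\eqref{bigsystem}, via its symmetric/antisymmetric decomposition under the involution, must vanish; conversely a nonzero kernel element of $\mathcal{L}$ gives a nonzero $(X,\overline X)$ solving~\eqref{bigsystem}.

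\medskip

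The main obstacle I anticipate is the direction showing that uniqueness for~\eqref{bigsystem} forces uniqueness for~\eqref{hom} without accidentally introducing spurious solutions. A nonzero solution of~\eqref{bigsystem} need not satisfy $Y = \overline{X}$, so I cannot directly read off a nonzero solution of~\eqref{hom} from an arbitrary nonzero solution of~\eqref{bigsystem}. The clean way around this is the involution argument: because the map $\sigma:(X,Y)\mapsto(\overline Y,\overline X)$ is an $\mathbb{R}$-linear involution preserving the (complex) solution space $\mathcal{N}$ of~\eqref{bigsystem}, one has $\mathcal{N} = \mathcal{N}^+ \oplus \mathcal{N}^-$ into $\pm 1$ eigenspaces of $\sigma$, and $\mathcal{N}^+$ corresponds exactly to the solutions of~\eqref{hom}. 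If $\mathcal{N} = 0$ then certainly $\mathcal{N}^+ = 0$; for the converse I would note that $\sigma$ maps $\mathcal{N}^-$ into $\mathcal{N}^+$ after multiplication by $\iunit$ (i.e. $X \mapsto \iunit X$ interchanges the eigenspaces), so $\dim_{\mathbb{R}}\mathcal{N}^+ = \dim_{\mathbb{R}}\mathcal{N}^-$ and $\mathcal{N} = 0 \iff \mathcal{N}^+ = 0$. Verifying this last interchange rigorously — that multiplication by $\iunit$ swaps the two $\sigma$-eigenspaces and respects the complex-linear structure of~\eqref{bigsystem} — is the delicate step, and I would spell it out carefully to close the proof.
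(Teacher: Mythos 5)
Your high-level strategy --- decompose the solution space of \eqref{bigsystem} under an involution whose fixed points correspond to solutions of \eqref{hom}, and use multiplication by $\iunit$ to swap the two eigenspaces --- is sound and is, in disguise, exactly what the paper does: its candidate solution $X+Y^*$ is the first component of the projection of $(X,Y)$ onto the $+1$-eigenspace, and its final step (``if $X+Y^*=0$ then $\iunit X$ is a nonzero solution'') is your eigenspace swap. The problem is that you have chosen the wrong identification, hence the wrong involution, and you have deferred precisely the computation where the error lives. The auxiliary unknown must be $Y=X^*$, not $Y=\overline X$: with $A\in\CC^{p\times m}$, $B\in\CC^{n\times q}$, $C\in\CC^{p\times n}$, $D\in\CC^{m\times q}$ and $X\in\CC^{m\times n}$, the product $CYD$ in \eqref{bigsystem} forces $Y$ to be $n\times m$, which is the shape of $X^*$ and not of $\overline X$ (these differ whenever $m\neq n$, which is the case of interest here). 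With $Y=X^*$ the first equation of \eqref{bigsystem} literally \emph{is} \eqref{hom} and the second is its conjugate transpose; no ``massaging'' or relabelling is needed, and the step you flagged as ``routine-but-error-prone'' disappears.

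The same misidentification propagates to your involution: $\sigma(X,Y)=(\overline Y,\overline X)$ is dimensionally inconsistent for $m\neq n$ and does not preserve the solution set of \eqref{bigsystem} --- conjugating the second equation entrywise yields $D^\top\overline X C^\top+B^\top\overline Y A^\top=0$, which is not the first equation evaluated at $(\overline Y,\overline X)$. The involution that works is $\sigma(X,Y)=(Y^*,X^*)$: applying $*$ to the second equation of \eqref{bigsystem} gives $CX^*D+AY^*B=0$, which is the first equation evaluated at $(Y^*,X^*)$, and symmetrically for the other one. Its fixed points are the pairs $(X,X^*)$, i.e.\ solutions of \eqref{hom}; its $-1$-eigenvectors $(X,-X^*)$ become fixed points after multiplication by $\iunit$, because $\sigma$ is conjugate-linear while \eqref{bigsystem} is $\CC$-linear. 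Once you make these two corrections your argument closes and reduces to the paper's proof, which writes the decomposition out concretely: a nonzero solution $X$ of \eqref{hom} gives the nonzero solution $(X,X^*)$ of \eqref{bigsystem}; conversely, for a nonzero solution $(X,Y)$ of \eqref{bigsystem} the matrix $X+Y^*$ solves \eqref{hom}, and if it vanishes then $Y=-X^*$ and $\iunit X$ is a nonzero solution of \eqref{hom}.
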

\begin{proof}
Let us first assume that \eqref{hom} has a nonzero solution $X$. Then this gives a nonzero solution $(X,X^*)$ of \eqref{bigsystem}.

To prove the converse, let $(X,Y)$ be a nonzero solution of \eqref{bigsystem}. Then, the matrix $X+Y^*$ is a solution of \eqref{hom}. If $X+Y^*$ is zero, then $Y=-X^*$, and in this case ${\mathfrak i}X$ is a nonzero solution of \eqref{hom}, with ${\mathfrak i}:=\sqrt{-1}$.
\end{proof}

The matrix associated to~\eqref{bigsystem} after applying the vec operator is
\begin{equation}\label{mstar}
M=\left[\begin{array}{cc}B^\top\otimes A&D^\top\otimes C\\\overline C\otimes D^*&\overline A\otimes B^*\end{array}\right].
\end{equation}

\section{Proof of Theorem \ref{thm:main}}\label{sec:starsylv}

Here we provide a proof of Theorem~\ref{thm:main}, which gives a complete characterization of the uniqueness of solution of \eqref{gensylv} for any right-hand side.

We split the proof in four cases:
\begin{itemize}
\item[(C1)] $mn\ne pq$;
\item[(C2)] $mn=pq$ and $p\not\in\{m,n\}$;
\item[(C3)] $mn=pq$ and $p\in\{m,n\}$, with $m\ne n$;
\item[(C4)] $mn=pq$ and $p\in\{m,n\}$, with $m=n$.
\end{itemize}

In Case (C4) all coefficients are square and Theorem~\ref{thm:main} holds, because it reduces to Theorem \ref{thm:di16}.

In Case (C1) the theorem is true because Equation \eqref{gensylv} fails to have a unique solution for any right-hand side $E$ and the conditions (1)--(3) cannot be fulfilled. Indeed, in order for \eqref{gensylv} to have a unique solution, for any right-hand side $E$, the coefficient matrix associated with Equation \eqref{gensylv} must be square, and this implies $mn=pq$, as explained in Section \ref{size-sec}. On the other hand, 
the conditions in each case (1)--(3) in the statement, imply $mn=pq$. 

Case (C2), in which every coefficient is non-square, namely $p\notin\{m,n\}$, will be treated in Section \ref{sec:rectangular}, and Case (C3), in which two coefficients are square and two are non-square, namely $p\in\{m,n\}$, with $m\ne n$, will be treated in Section \ref{sec:twosquare}.

\subsection{The case $mn=pq$ and $p\not\in\{m,n\}$}\label{sec:rectangular}

In this section we show that Theorem~\ref{thm:main} holds if $p\notin \{m,n\}$, with $mn=pq$. In particular we will show that, in this case, the pencil $\mathcal Q(\la)$ is singular and Equation \eqref{gensylv} fails to have a unique solution for the right hand side $E=0$. Note that, because of the restriction $mn=pq$, this also implies that $q\notin \{m,n\}$, so this situation covers all instances of Theorem \ref{thm:main} where none of the coefficient matrices are square.

We first show that $\mathcal Q(\la)$ is non-square and thus singular. Note that $\cQ(\la)$ has size $(p+q)\times(m+n)$. If $p+q=m+n$ this fact, together with the identity $mn=pq$, would imply $\{m,n\}=\{p,q\}$, since both $m,n$ and $p,q$ are the roots of the same quadratic polynomial, namely $x^2-(m+n)x+mn$.

Then, we show that  Equation \eqref{hom} admits a non-zero solution.
\begin{lemma}\label{lem:rectangular}
Let $A\in\CC^{p\times m},B\in\CC^{n\times q},C\in\CC^{p\times n},D\in\CC^{m\times q}$. If $mn=pq$ and $p\notin \{m,n\}$ then $AXB+CX^\star D=0$ has a nonzero solution.
\end{lemma}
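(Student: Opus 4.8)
The plan is to reduce the statement to the non-invertibility of a square linear operator and then to exhibit a nonzero element of its kernel. By the vectorization discussion, and since $mn=pq$, Equation~\eqref{hom} with $\star=\top$ amounts to a square linear system $M\opvec(X)=0$, so it suffices to show that the operator $\mathcal L\colon X\mapsto AXB+CX^\top D$, $\CC^{m\times n}\to\CC^{p\times q}$, fails to be injective; equivalently, being square, that it fails to be surjective. For $\star=*$ I would argue identically after replacing~\eqref{hom} by the system~\eqref{bigsystem} of Lemma~\ref{lem:conjcase} (or by its real form associated with~\eqref{mstar}), which is again square. Thus in all cases the goal becomes: the associated operator is singular whenever $p\notin\{m,n\}$ and $mn=pq$.

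The main tool I would use is the group of transformations that preserve the solution set: left/right multiplication of the equation, $(A,C)\mapsto(PA,PC)$ and $(B,D)\mapsto(BR,DR)$ with $P,R$ invertible, together with the coupled substitution $X=S\tilde X T$, which sends $(A,B,C,D)\mapsto(AS,\,TB,\,CT^{\star},\,S^{\star}D)$. A direct check shows that these act as strict equivalences $\cQ(\lambda)\mapsto U\cQ(\lambda)V$ with $U,V$ invertible and block diagonal, so they may be used to bring $A,B,C,D$ to rank-normal (partial-identity) form without changing solvability. Several regimes are then immediate. If $\operatorname{rank}A\cdot\operatorname{rank}B+\operatorname{rank}C\cdot\operatorname{rank}D<mn$, a dimension count gives a nonzero $X$ with $AXB=0$ and $CX^{\star}D=0$ simultaneously. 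More sharply, if $[\,A\ \ C\,]$ lacks full row rank one finds $F\neq0$ with $FA=FC=0$, whence $F\mathcal L(X)=0$ for all $X$ and $\mathcal L$ is not surjective; symmetrically for $\bigl[\begin{smallmatrix}B\\ D\end{smallmatrix}\bigr]$ not of full column rank, and for the mixed pairs $FA=0,\ DG=0$ or $FC=0,\ BG=0$, using the identity $F\mathcal L(X)G=(FA)XBG+FCX^{\star}(DG)$. These annihilator arguments dispatch all cases in which some coefficient drops rank, or in which $p$ or $q$ is large compared with $m+n$.

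The hard part is the complementary, fully coupled regime---for instance $m=4,n=6,p=8,q=3$---where every coefficient has maximal rank, $\operatorname{rank}A\cdot\operatorname{rank}B+\operatorname{rank}C\cdot\operatorname{rank}D>mn$, and no one-sided annihilator exists. Here $\operatorname{Im}\mathcal L$ is strictly smaller than $\{AXB\}+\{CX^{\star}D\}$ precisely because both terms share the single unknown $X$ (and $X^{\star}$), so no additive rank bound can detect the singularity. I would resolve this by exploiting the hypothesis directly: from $pq=mn$ and $p\notin\{m,n\}$ one gets $\{p,q\}\neq\{m,n\}$, hence $p+q\neq m+n$, so $\cQ(\lambda)$ is a rectangular and therefore singular pencil. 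A singular pencil has a nonempty set of minimal indices, i.e.\ a nonzero (possibly constant) polynomial vector $v(\lambda)$ with $\cQ(\lambda)v(\lambda)\equiv0$; reshaping the coefficients of such a null vector through the substitution above yields the desired nonzero $X$. The delicate points, and where the real work lies, are (i) confirming that the normalizing transformations genuinely realize strict equivalence of $\cQ(\lambda)$ so that the singular block is present, and (ii) reconstructing $X$ from that block while respecting the $\star$-structure (the conjugation of Lemma~\ref{lem:conjcase} when $\star=*$).
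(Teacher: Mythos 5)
Your annihilator arguments are sound where they apply, and they do cover the two ``interleaved'' cases: if $m<p<n$ then $mn=pq$ forces $m<q<n$, so $A$ (tall) has a left null vector $F$ and $D$ (wide) has a right null vector $G$, and $F\mathcal L(X)G=0$ shows non-surjectivity --- this is exactly the paper's case $m<p<n$, done there in vectorized form via $(v^\top\otimes u^\top)M=0$; the case $n<p<m$ is symmetric. The gap is in the two remaining cases, $p<\min\{m,n\}$ and $p>\max\{m,n\}$, which you have misdiagnosed as the hard part and for which your concrete tools genuinely fail. Take your own example $m=4$, $n=6$, $p=8$, $q=3$ with generic full-rank coefficients: the dimension count gives $\operatorname{rank}A\cdot\operatorname{rank}B+\operatorname{rank}C\cdot\operatorname{rank}D\le 4\cdot3+6\cdot3=30>24=mn$, the block $[\,A\ \ C\,]$ is $8\times10$ and generically of full row rank, $\bigl[\begin{smallmatrix}B\\D\end{smallmatrix}\bigr]$ is $10\times 3$ and generically of full column rank, and neither $B$ nor $D$ has a right null vector. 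But this case is in fact trivial: since $q<\min\{m,n\}$, both $B\in\CC^{n\times q}$ and $D\in\CC^{m\times q}$ have nontrivial \emph{left} null spaces, so one picks $v^\star B=0$ and $u^\star D=0$ and checks directly that $X=uv^\star\neq0$ solves $AXB+CX^\star D=0$; dually, when $p<\min\{m,n\}$ one takes $Au=0$, $Cv=0$ and again $X=uv^\star$. You missed this because you were looking only for functionals annihilating the image (plus a rank count for the kernel that is too crude), rather than exhibiting a rank-one element of the kernel whose two defining conditions constrain \emph{different} vectors and are therefore always simultaneously satisfiable.

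Because of this, the fallback you rely on for those cases --- $\cQ(\lambda)$ is rectangular, hence singular, hence has a minimal index, and ``reshaping'' a polynomial null vector yields a nonzero $X$ --- carries the entire burden of the proof there, and it is not justified. Two concrete problems: first, when $p+q>m+n$ the pencil has more rows than columns and need not possess any \emph{right} null polynomial vector, so you would have to work with left minimal indices and explain how those produce an element of the kernel of $X\mapsto AXB+CX^\star D$ rather than of its adjoint; second, no recipe is given for converting the coefficients of a null vector of $\cQ(\lambda)$ into a solution $X$, and the paper's Section on the necessity of the invertibility conditions (two equations whose pencils are permutation-equivalent yet have different solution-set dimensions) shows that the passage from the Kronecker data of $\cQ(\lambda)$ back to the equation is exactly the delicate point of this whole theory. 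As it stands the proposal proves the lemma only in the cases $m<p<n$ and $n<p<m$; the cases $p<\min\{m,n\}$ and $p>\max\{m,n\}$ need the elementary rank-one construction above (or some completed version of your pencil argument, which would be far more work than the statement warrants).
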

\begin{proof} We consider separately four cases, depending on whether $p$ is smaller or larger than $m$ and $n$.

\begin{enumerate}

\item $p<\min\{m,n\}$. There are two nonzero vectors $u,v$ such that $Au=0$ and $Cv=0$, because of the dimensions of these two matrices. Then $X=uv^\star$ is a nonzero solution of \eqref{hom}.

\item If $p>\max\{m,n\}$, the identity $mn=pq$ implies $q<\min\{m,n\}$. Then, there are two nonzero vectors $u,v$ such that $v^\star B=0$, $u^\star D=0$, and $X=uv^\star$ is a nonzero solution of \eqref{hom}.

\item $m<p<n$. In this case, and because of the identity $mn=pq$, we have $m<q<n$ as well. Therefore, $m<\min\{p,q\}$. In particular, there exist nonzero vectors $u,v$ such that $u^\top A = 0$, $v^\top D^\top = 0$.

Now we consider the cases:
\begin{itemize}

\item[(a)] $\star=\top$. As argued in Section~\ref{size-sec}, Equation~\eqref{hom} is equivalent to the linear system $M\opvec{X} = 0$, with the matrix $M\in\mathbb{C}^{(mn)\times(mn)}$ as in~\eqref{m}. Then, $(v^\top \otimes u^\top)M = 0$, so $M$ is singular and \eqref{hom} has a nonzero solution.

\item[(b)] $\star=*$. As a consequence of Lemma~\ref{lem:conjcase}, Equation~\eqref{hom} has a nonzero solution if and only if the (square) matrix~\eqref{mstar} is singular. It is easy to verify that $\begin{bmatrix}
  v^\top \otimes u^\top & u^* \otimes v^*
\end{bmatrix} M = 0$, so $M$ is indeed singular.
\end{itemize}

\item $n<p<m$. By setting $Y=X^\star$, Equation \eqref{hom} is equivalent to $CYD+AY^\star B=0$, so we use the result for the previous case on this equation. \qedhere
\end{enumerate}
\end{proof}


\subsection{The case $mn=pq$ and $p\in\{m,n\}$, with $m\ne n$}\label{sec:twosquare}

In this section we show that Theorem~\ref{thm:main} holds for $p\in\{m,n\}$, with $mn=pq$ and $m\ne n$.

Since for $mn=pq$ the matrix associated with Equation \eqref{gensylv} is square, the unique solvability of the generalized $\star$-Sylvester equation \eqref{gensylv}, for any right-hand side, is equivalent to the existence of a unique solution of the homogeneous equation \eqref{hom}. Then, it is sufficient to prove the equivalence of conditions on the pencil $\mathcal Q(\la)$ with the uniqueness of solution of the homogeneous equation \eqref{hom}.


We have either $p=m$ or $p=n$, which
imply $q = n$ and $q = m$, respectively, due to the constraint
$mn = pq$. We will prove first the case in which $p=m>n=q$; then we will reduce the remaining cases: $p=m<n=q$, and $p=n\ne m$, to this one.

Let us assume that $p = m > n = q$, so that $D$ has more rows than columns, and there is some $u\neq0$ such that $u^\star D = 0$. If $B$ is singular, then there is some $v\neq0$ such that $v^\star B= 0$. Therefore $X = uv^\star$ is a nontrivial solution
of \eqref{hom}. 

Assume now that \eqref{hom} has a unique solution. Then, $B$ is
guaranteed to be nonsingular, and 
$AXB + CX^\star D = 0$ has a unique solution if and only if
$AX + CX^\star DB^{-1} = 0$ has a unique solution. 
Moreover, we can find an invertible matrix $Q\in\C^{m
\times m}$ such that
\begin{equation}\label{qd}
QDB^{-1}=\left[\begin{array}{c}D_1\\0\end{array}\right],
\end{equation}
with $D_1\in\CC^{n\times n}$. This allows us to rewrite
\eqref{hom}, after multiplying on the right by $B^{-1}$, and setting $Y=Q^{-\star}X$, in the equivalent form
\begin{equation}\label{eqb}
AQ^\star \left[\begin{array}{c}Y_1\\Y_2\end{array}\right]+C\left[\begin{array}{cc}Y_1^\star&Y_2^\star\end{array}\right] \left[\begin{array}{c}D_1\\0\end{array}\right]=0,
\end{equation}
where $Y_1$ has size $n\times n$ and $Y_2$ has size $(m-n)\times n$. If we partition $AQ^\star$ conformally as
\begin{equation}\label{aqstar}
AQ^\star=\left[\begin{array}{cc}\widetilde A_{11}&\widetilde A_{12}\\\widetilde A_{21}&\widetilde A_{22}\end{array}\right],
\end{equation}
with $\widetilde A_{11}\in\CC^{n\times n},\widetilde A_{22}\in\CC^{(m-n)\times(m-n)}$, then the block $\left[\begin{smallmatrix}\widetilde A_{12}\\\widetilde A_{22}\end{smallmatrix}\right]$ has full column rank. 
If that was not the case we could find $Y_2 \neq 0$ such that
\[
  \begin{bmatrix}
    \widetilde A_{12} \\
    \widetilde A_{22} \\
  \end{bmatrix} Y_2 = 0,
  \]
  and this would imply
  \[
  AQ^\star \begin{bmatrix}
    0 \\
    Y_2 \\
  \end{bmatrix} + C \begin{bmatrix}
    0 & Y_2^\star
  \end{bmatrix} \begin{bmatrix}
    D_1 \\ 
    0   \\
  \end{bmatrix} = 0, 
\]
so equation~\eqref{eqb} would have a nontrivial solution. 
Then, there is an invertible matrix $U\in\C^{m\times m}$ such that
\[
UAQ^\star=U\left[\begin{array}{cc}\widetilde A_{11}&\widetilde A_{12}\\\widetilde A_{21}&\widetilde A_{22}\end{array}\right]=
\left[\begin{array}{cc}\widehat A_{11}&0\\\widehat A_{21}&\widehat A_{22}\end{array}\right],
\]
with $\widehat A_{22}\in\CC^{(m-n)\times (m-n)}$ nonsingular. If we set $UC=\left[\begin{smallmatrix}\widehat C_1\\\widehat C_2\end{smallmatrix}\right]$, with $\widehat C_1\in\CC^{n\times n},\widehat C_2\in\CC^{(m-n)\times n}$ then, after multiplying on the left by $U$, \eqref{eqb} is equivalent to the system
\[
\begin{array}{ccc}
\widehat A_{11} Y_1+\widehat C_1Y_1^\star D_1&=&0,\\
\widehat A_{22} Y_2&=&-(\widehat A_{21}Y_1+\widehat C_2Y_1^\star D_1).
\end{array}
\]
Since $\widehat A_{22}$ is nonsingular, the above system has a unique
solution if and only if the first equation
\begin{equation}\label{redsylv}
\widehat A_{11}Y_1+\widehat C_1Y_1^\star D_1=0
\end{equation}
has a unique solution. 

We are now ready to relate the uniqueness of solution of \eqref{hom} to the
spectral properties of the pencil $\cQ(\la)$ in the statement of the theorem. 
We perform the following left and right invertible transformations
to $\cQ(\la)$:
\begin{equation}\label{product-q}
\left[\begin{array}{cc}B^{-\star}&0\\0&U\end{array}\right]
\left[\begin{array}{cc}\la D^\star&B^\star\\A&\la C\end{array}\right]
\left[\begin{array}{cc}Q^\star&0\\0&I\end{array}\right]=
\left[\begin{array}{ccc}\la D_1^\star&0&I\\\widehat A_{11}&0&\la\widehat C_1\\\widehat A_{21}&\widehat A_{22}&\la \widehat C_2\end{array}\right].
\end{equation}
Set
\[
\widehat\cQ(\la)=\left[\begin{array}{cc}\la D_1^\star&I\\\widehat A_{11}&\la\widehat C_1\end{array}\right].
\]
Then, by \eqref{product-q},
$
\det \cQ(\la)= \alpha \det \widehat\cQ(\la),
$
where
$\alpha=\pm(\det \widehat A_{22}\det B^\star)/(\det U\det Q^\star)$ is a nonzero constant.

Since all coefficient matrices in \eqref{redsylv} are square and have the same size, namely $n\times n$, Theorem \ref{thm:di16} implies that $\widehat \cQ(\la)$ is regular (so $\cQ(\la)$ is regular as well) and 
\begin{itemize}
\item If $\star=\top$, $\Lambda(\widehat\cQ)\setminus\{\pm1\}$ is reciprocal free and $m_1(\widehat\cQ)=m_{-1}(\widehat\cQ)\leq1$.

\item If $\star=*$, $\Lambda(\widehat\cQ)$ is $*$-reciprocal free.
\end{itemize}

Note that \eqref{product-q} implies that $\widehat \Lambda(\cQ)=\Lambda(\widehat\cQ)$, since $\widehat Q$ is obtained by deflating $m-n$ infinite eigenvalues from the pencil in the right hand side of \eqref{product-q}. So the previous two conditions are equivalent to the conditions on the spectrum of $\cQ(\la)$ in the statement of the theorem.

To prove the converse, let us assume that $B$ is invertible, and that $\cQ(\la)$ is regular and its spectrum satisfies the conditions in the statement of the theorem. Then we can define the matrix $Q$ as in \eqref{qd} and we arrive at \eqref{aqstar}. Again, the block $\left[\begin{smallmatrix}\widetilde A_{12}\\\widetilde A_{22}\end{smallmatrix}\right]$ has full column rank since, otherwise, the pencil $\cQ(\la)$ would be singular. This is an immediate consequence of the identity:
\[
\left[\begin{array}{cc}B^{-\star}&0\\0&I\end{array}\right]
\left[\begin{array}{cc}\la D^\star&B^\star\\A&\la C\end{array}\right]
\left[\begin{array}{cc}Q^\star&0\\0&I\end{array}\right]=
\left[\begin{array}{ccc}\la D_1^\star&0&I\\\widetilde A_{11}&\widetilde A_{12}&\la \wt C_1\\\widetilde A_{21}&\widetilde A_{22}&\la \wt C_2\end{array}\right],
\]
where $\left[\begin{smallmatrix}{\wt C_1}\\{\wt C_2}\end{smallmatrix}\right]=C$.

Proceeding as before, we conclude, that \eqref{hom} is equivalent to \eqref{redsylv}. Using the fact that $\widehat \Lambda(\cQ)=\Lambda(\widehat\cQ)$ and applying Theorem \ref{thm:di16}, with the hypotheses on $\cQ(\la)$, to Equation \eqref{redsylv}, we conclude that the latter has a unique solution, and this implies that \eqref{hom} has a unique solution.

Now that we have proved the case $p=m>n=q$, we consider the case $p=m<n=q$ and then the case $p=n\ne m$. 

Let us assume that $p=m<n=q$. After applying the $\star$ operator in \eqref{hom} and setting $Y=X^\star$, we arrive at the equivalent equation
$
B^\star YA^\star+D^\star Y^\star C^\star=0.
$
This equation is of the form \eqref{hom}, with the coefficients of the first summand being square, $B^\star\in\CC^{n\times n},A^\star\in\CC^{m\times m}$ and $n>m$, so we are in the same conditions  as before. 
Applying the result just proved for this case, we get that the unique solvability is equivalent to requiring that $A$ is invertible and that the pencil 
\[
\widetilde\cQ(\la)=\left[\begin{array}{cc}\la C&A\\B^\star&\la D^\star\end{array}\right]
\]
satisfies the conditions in the statement of the theorem. But, since
\[
\widetilde\cQ(\la)=\left[\begin{array}{cc}0&I\\I&0\end{array}\right]\cQ(\la)\left[\begin{array}{cc}0&I\\I&0\end{array}\right],
\]
this is equivalent to requiring that $\cQ(\la)$ satisfies these conditions as well.

For the case $p=n\neq m$, we apply the $\star$ operator in \eqref{hom}, and we arrive at the equivalent equation
$
D^\star XC^\star+B^\star X^\star A^\star=0,
$
whose coefficients are in the conditions of the previous case. The pencil associated to this last equation is
\[
\wt\cQ(\la)=\left[\begin{array}{cc}\la A&C\\D^\star&\la B^\star\end{array}\right].
\]
This pencil is the reversal of the pencil:
\[
\left[\begin{array}{cc}0&I\\I&0\end{array}\right]\cQ(\la),
\]
so $\Lambda(\wt\cQ)=\Lambda^{-1}(\cQ):=\{\la^{-1}:\la\in\Lambda(\cQ)\}$, including multiplicities. In particular, the conditions on being ($*$-)reciprocal free in the statement are the same for both pencils, and the roles of the zero and the infinite eigenvalue are exchanged.



\begin{remark}
The conditions $n=q$ in part 1, and $m=q$ in part 2 in Theorem \ref{thm:main} are redundant, but we have included them for emphasis. These conditions are a consequence of the fact that $\cQ(\la)$ in \eqref{pencil} is regular and the other conditions on the size, namely $p=m$ and $p=n$, respectively. As indicated in the proof of Theorem \ref{thm:main}, since $\cQ(\la)$ has size $(p+q)\times(m+n)$, if it is regular, it must be, in particular, square, and this implies $m+n=p+q$. 
\end{remark}

\subsection{Necessity of the invertibility conditions} \label{sec:counterexample}

The characterization of the uniqueness of solution of \eqref{gensylv} in Theorem~\ref{thm:main} involves, in cases 1 and 2, the invertibility of some of the coefficient matrices. One might wonder if these conditions are
really needed, or whether they could be stated in terms of spectral properties of the pencil $\cQ(\la)$. However, the following example shows that the uniqueness of solution does not depend solely on the eigenvalues of
$\cQ(\la)$. Consider the following
generalized $\top$-Sylvester equations (the same example
works for the $\star = *$ case): 
\begin{align}
  \begin{bmatrix}
    1 &0\\
    0& 1 \\
  \end{bmatrix} \begin{bmatrix}
    x \\ y 
  \end{bmatrix} \begin{bmatrix} 0 \end{bmatrix} + 
  \begin{bmatrix}
    1 \\
    0 \\
  \end{bmatrix} \begin{bmatrix}
    x & y \\
  \end{bmatrix} \begin{bmatrix}
    1 \\
    0
  \end{bmatrix} = 0,\label{eq1} \\
  \begin{bmatrix}
    0 & 0 \\
    0 & 1 \\
  \end{bmatrix} \begin{bmatrix}
  x \\ y 
  \end{bmatrix} \begin{bmatrix} 1 \end{bmatrix} + 
  \begin{bmatrix}
  1 \\
  0 \\
  \end{bmatrix} \begin{bmatrix}
  x & y \\
  \end{bmatrix} \begin{bmatrix}
  1 \\
  0
  \end{bmatrix} = 0\label{eq2}.
\end{align}
The above equations have associated pencils defined as follows: 
\[
  \cQ_1(\la) = \left[\begin{array}{cc|c}
    \la & 0 & 0 \\\hline
    1   & 0 & \la \\
    0   & 1 & 0 \\ 
  \end{array}\right], \qquad 
    \cQ_2(\la) = \left[\begin{array}{cc|c}
    \la & 0 & 1 \\\hline
    0   & 0 & \la \\
    0   & 1 & 0 \\ 
    \end{array}\right].
\]
The above pencils are the same up to row and column permutations, 
so they have not just the same eigenvalues, but also the same Kronecker canonical form.
However, the corresponding generalized Sylvester equations \eqref{eq1}--\eqref{eq2} can be rewritten, 
respectively, as
\[
  x = 0, \qquad \mbox{and}\qquad
  x=y=0.
\]
Then \eqref{eq1} has infinitely many solutions, while 
\eqref{eq2} has a unique solution. 

\section{Some corollaries}

The characterization given in Theorem \ref{thm:main} depends on spectral properties of the pencil $\cQ(\la)$ in \eqref{pencil}, which has twice the size of the coefficient matrices of Equation \eqref{gensylv}. With some additional effort, we can provide a characterization in terms of pencils with exactly the same size.

\begin{corollary}\label{thm:nxn}
Let $A\in\CC^{p\times m},B\in\CC^{n\times q},C\in\CC^{p\times n},$ and $D\in\CC^{m\times q}$. Then the equation 
$AXB+CX^\star D= E$
has a unique solution, for any right-hand side $E$, if and only if one of the following situations holds:

\begin{itemize} 
\item[{\rm(a)}] $p=m \leq n=q$, $A$ is invertible, the pencil $\cP_1(\la):= B^\star-\la  D^\star A^{-1}C$ is regular and
\begin{itemize} 
\item If $\star=\top$, $\widehat{\Lambda}(\cP_1)\setminus\{1\}$ is reciprocal free and $m_1(\cP_1)\leq1$.
\item If $\star=*$, $\widehat{\Lambda}(\cP_1)$ is $*$-reciprocal free.
\end{itemize}

\item[{\rm(b)}] $p=m\geq n=q$, $B$ is invertible, the pencil $\cP_2(\la):= A^\star-\la DB^{-1}C^\star$ is regular and
\begin{itemize} 
\item If $\star=\top$, $\widehat{\Lambda}(\cP_2)\setminus\{1\}$ is reciprocal free and $m_1(\cP_2)\leq1$.
\item If $\star=*$, $\widehat{\Lambda}(\cP_2)$ is $*$-reciprocal free.
\end{itemize}

\item[{\rm (c)}] $p=n\leq m=q$, $C$ is invertible, the pencil $\cP_3(\la):= D^\star-\la B^\star C^{-1}A$ is regular and
\begin{itemize} 
\item If $\star=\top$, $\widehat{\Lambda}(\cP_3)\setminus\{1\}$ is reciprocal free and $m_1(\cP_3)\leq1$.
\item If $\star=*$, $\widehat{\Lambda}(\cP_3)$ is $*$-reciprocal free.
\end{itemize}

\item[{\rm(d)}] $p=n\geq m=q$, $D$ is invertible, the pencil $\cP_4(\la):=C^\star-\la BD^{-1}A^\star$ is regular and
\begin{itemize} 
\item If $\star=\top$, $\widehat{\Lambda}(\cP_4)\setminus\{1\}$ is reciprocal free and $m_1(\cP_4)\leq1$.
\item If $\star=*$, $\widehat{\Lambda}(\cP_4)$ is $*$-reciprocal free.
\end{itemize}

\end{itemize}
\end{corollary}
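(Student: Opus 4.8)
The strategy is to deduce the corollary from Theorem~\ref{thm:main} by eliminating one block of the $2\times 2$ pencil $\cQ(\la)$ using the invertible coefficient, thereby trading the spectral conditions on the $(m+n)$-sized pencil $\cQ(\la)$ for conditions on the smaller pencils $\cP_i(\la)$. First I would observe that the four cases of the corollary merely regroup the cases of Theorem~\ref{thm:main}: case (1) with $m<n$ is part~(1) of the theorem (with $A$ invertible), case~(2) with $m>n$ is part~(1) (with $B$ invertible), cases~(3)--(4) are the two branches of part~(2), and the square situation $p=m=n=q$ is covered (with overlap) by whichever coefficient happens to be invertible --- note that in the square regular case at least one of $A,B,C,D$ must be invertible, since otherwise $\{0,\infty\}\subseteq\Lambda(\cQ)$ would destroy reciprocal (or $*$-reciprocal) freeness. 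Thus it suffices to match, case by case, the spectral conditions on $\cQ(\la)$ with those on $\cP_i(\la)$.

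Second, I would carry out the block elimination. In case~(1), where $A\in\CC^{m\times m}$ is invertible, left-multiplying $\cQ(\la)$ by $\left[\begin{smallmatrix} I & -\la D^\star A^{-1}\\ 0 & I\end{smallmatrix}\right]$ clears the $(1,1)$ block and yields a block anti-triangular pencil whose determinant gives
\begin{equation*}
\det\cQ(\la)=\pm\,\det A\cdot\det\cP_1(\la^2),\qquad \cP_1(\mu)=B^\star-\mu\,D^\star A^{-1}C .
\end{equation*}
Hence $\cQ(\la)$ is regular if and only if $\cP_1(\la)$ is, and the two spectra are linked by the two-to-one map $\la\mapsto\la^2$. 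The same computation, applied to the block carrying the invertible coefficient (and, for cases~(2)--(4), combined with the $\star$-operator and reversal reductions already used in the proof of Theorem~\ref{thm:main}, which send $\cQ(\la)$ to $\rev\bigl(\left[\begin{smallmatrix}0&I\\I&0\end{smallmatrix}\right]\cQ(\la)\bigr)$ and conjugate the spectrum), produces the analogous factorizations through $\cP_2,\cP_3,\cP_4$. I would also record that the leading coefficient $D^\star A^{-1}C$ has rank at most $m$, so that $\det\cP_1$ has degree at most $m$; this is precisely what forces the $|m-n|$ dimension-induced infinite eigenvalues, on the $\cQ$ side and the $\cP_1$ side alike.

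Third, I would transfer the spectral conditions through $\la\mapsto\la^2$. Squaring commutes with inversion and with conjugation, so it carries reciprocal (resp.\ $*$-reciprocal) pairs to reciprocal (resp.\ $*$-reciprocal) pairs; moreover $\pm1\mapsto1$ and $\pm\mathrm{i}\mapsto-1$. From the factorization one reads $m_1(\cQ)=m_{-1}(\cQ)=m_1(\cP_1)$, which both explains why the equality $m_1(\cQ)=m_{-1}(\cQ)$ becomes automatic and reduces that condition to $m_1(\cP_1)\le 1$; likewise the forbidden self-reciprocal value $-1\in\Lambda(\cP_1)$ corresponds exactly to the forbidden reciprocal pair $\{\mathrm{i},-\mathrm{i}\}\subseteq\Lambda(\cQ)$. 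Finally the $|m-n|$ dimension-induced infinities appear on both sides and cancel in the correspondence, so that $\Lambda(\cP_1)$ is matched with $\widehat\Lambda(\cQ)$ (its core spectrum), in agreement with the identity $\widehat\Lambda(\cQ)=\Lambda(\widehat\cQ)$ used in the proof of Theorem~\ref{thm:main}. Assembling these correspondences turns the conditions of Theorem~\ref{thm:main} into those of the corollary in each case.

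The main obstacle is exactly this last bookkeeping at the special points $\{0,\infty,\pm1,\pm\mathrm{i}\}$: one must verify that the two-to-one map pairs multiplicities correctly (including the doubling $m_0(\cQ)=2\,m_0(\cP_1)$ and the identification of the dimension-induced infinities with those produced by the rank-$\le m$ leading coefficient), and, in the $\star=*$ instances of cases~(2)--(4), that the conjugation twist $\la\mapsto\overline{\la}^{\,2}$ introduced by the Schur complement is harmless for $*$-reciprocal freeness (which it is, since the unit circle $|\la|=1$ is preserved). A secondary point to check carefully is the invertibility hypothesis in each case: that unique solvability indeed forces the stated coefficient to be invertible, which I would obtain exactly as in Lemma~\ref{lem:rectangular}, by exhibiting a rank-one solution $uv^\star$ whenever it fails.
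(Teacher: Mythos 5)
Your strategy coincides with the paper's: reduce to Theorem~\ref{thm:main}, clear the $(1,1)$ block of $\cQ(\la)$ with the invertible coefficient to obtain $\det\cQ(\la)=\pm\det A\cdot\det\cP_1(\la^2)$, push the spectral conditions through the two-to-one map $\la\mapsto\la^2$ (with $\pm1\mapsto1$, $\pm\mathrm{i}\mapsto-1$, and $m_1(\cQ)=m_{-1}(\cQ)=m_1(\cP_1)$), and dispatch cases (2)--(4) by the same $\star$/reversal reductions already used for Theorem~\ref{thm:main}. Your observation that regularity plus reciprocal freeness forces one of $A,B,C,D$ to be invertible in the square case is also correct and is exactly what is needed to cover that case.

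The gap sits precisely at the point you yourself flag as ``the main obstacle'' and then declare settled: the claim that the dimension-induced infinities ``cancel'' so that $\Lambda(\cP_1)$ is matched with $\widehat\Lambda(\cQ)$. These two sets need not agree at $\infty$. When $m<n$ the leading coefficient $D^\star A^{-1}C$ of $\cP_1$ has rank at most $m<n$, so $\infty\in\Lambda(\cP_1)$ unconditionally, whereas $\infty\in\widehat\Lambda(\cQ)$ only when $m_\infty(\cQ)>n-m$, equivalently $m_\infty(\cP_1)>n-m$. In the boundary case $m_\infty(\cP_1)=n-m$ the correspondence discards $\infty$ on the $\cQ$ side but not on the $\cP_1$ side, and this is fatal whenever $0\in\Lambda(\cP_1)$ (i.e.\ $B$ singular), because then $\{0,\infty\}\subseteq\Lambda(\cP_1)$ destroys reciprocal freeness even though $\widehat\Lambda(\cQ)$ may contain $0$ but not $\infty$ and remain reciprocal free. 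Concretely, take $m=p=1$, $n=q=2$, $\star=\top$, $A=[1]$, $B=\left[\begin{smallmatrix}1&0\\0&0\end{smallmatrix}\right]$, $C=D=[0\;\;1]$: the equation reads $[x\;\;0]+[0\;\;y]=[e_1\;\;e_2]$ and is trivially uniquely solvable, in agreement with Theorem~\ref{thm:main}(1) since $\det\cQ(\la)=-\la^2$ gives $\widehat\Lambda(\cQ)=\{0\}$; yet $\cP_1(\la)=\left[\begin{smallmatrix}1&0\\0&-\la\end{smallmatrix}\right]$ has $\Lambda(\cP_1)=\{0,\infty\}$, which is not reciprocal free. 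So the equivalence you assert in your third step does not hold as written; to close the argument one must work with a core spectrum of $\cP_1$ as well (discarding $\infty$ when $m_\infty(\cP_1)=|m-n|$), and the conditions on $\Lambda(\cP_i)$ in the statement have to be read accordingly.
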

\begin{proof} Let us assume first that \eqref{gensylv} has a unique solution, for any right-hand side $E$. Then \cite[Th. 3]{dipr} implies that at least one of the following situations holds: (C1) $p=m< n=q$ and $A$ is invertible, (C2) $p=m>n=q$ and $B$ is invertible, (C3) $p=n<m=q$ and $C$ is invertible, (C4) $p=n>m=q$ and $D$ is invertible, or (C5) $p=m=n=q$. Let us first assume that case (C1) holds. We can perform the following unimodular equivalence on $\cQ(\la)$:
  \begin{equation}\label{unimodular}
    \begin{bmatrix}
      I & -\la D^{\star} A^{-1} \\
      0& I \\
    \end{bmatrix} \begin{bmatrix}
      \la D^\star & B^\star \\
      A & \la C \\
    \end{bmatrix} = \begin{bmatrix}
      0 & B^\star - \la^2 D^{\star} A^{-1} C \\
      A & \la C \\
    \end{bmatrix}.
  \end{equation}
	Taking determinants in \eqref{unimodular} we arrive at
	\begin{equation}\label{deter}
	\det (\cQ(\la)) = \pm\det (A)\det(\cP_1(\la^2)).
	\end{equation}
	This shows that $\cP_1$ is regular.
	Note that $D^\star A^{-1}C$ has rank at most $m<n$, hence $\det(\cP_1(\la))$ has degree at most $m$ and $\abs{n-m}$ dimension-induced infinite eigenvalues are present in $\Lambda(\cP_1)$. Similarly, $\cQ(\la)$ has $\abs{n-m}$ dimension-induced infinite eigenvalues. The left- and right-hand sides of Equation~\eqref{deter} are nonzero polynomials in $\la$ with degree at most $2m$; therefore we have $\widehat\Lambda(\cQ)=\sqrt{\widehat{\Lambda}(\cP_1)}:=\bigl\{\mu:\ \mu^2\in\widehat{\Lambda}(\cP_1)\bigr\}$, including multiplicities and core infinite eigenvalues. Then \cite[Th. 3]{dipr} implies that part (a) in the statement holds. 

If case (C4) holds, then we apply the $\star$ operator in \eqref{gensylv} and the previous arguments to the new equation and its corresponding pencil $C-\la AD^{-\star}B^\star$, namely $\bigl(\mathcal P_4(\la^\star)\bigr)^\star$, and part (d) of the statement follows.

 If case (C3) holds, then after introducing the change of variables $Y=X^\star$, the roles of $A,B$ and $C,D$ are exchanged, so we apply the same arguments as in case (C1) to the corresponding pencil, $\cP_3(\la)$ and we get part (c).

In case (C2), we apply the $\star$ operator in \eqref{gensylv} and introduce the change of variables $Y=X^\star$. Then we apply the same arguments as for case (C1) to the new equation and its corresponding pencil $A-\la CB^{-\star}
D^\star$, namely $\bigl(\mathcal P_2( \la^\star)\bigr)^\star$, and part (b) of the statement follows.

Finally, if we are in case (C5), \cite[Cor. 12]{di16} guarantees that at least one of $A,B,C,D$ is invertible and thus at least one of (a)--(d) in the statement holds, and we are done.

To prove the converse, let us assume that any of (a)--(d) in the statement holds. Then, reversing the previous arguments and using \eqref{eq:rev}, we can conclude that at least one of the situations (i)--(iii) in the statement of \cite[Th. 3]{dipr} occurs, and \cite[Th. 3]{dipr} implies that \eqref{gensylv} has a unique solution, for any right-hand side.

\end{proof}

As another consequence of Theorem \ref{thm:main} we get an extension of \cite[Lemma 5.10]{bk06} and \cite[Lemma 8]{ksw09} for the $\star$-Sylvester equation $AX+X^\star D=E$ (see also \cite[Th. 10, Th. 11]{ddgmr11}) to the case of rectangular coefficients, showing that when the coefficients are non-square the equation cannot be uniquely solvable.

\begin{corollary}\label{sylv.coro}
Let $A\in\CC^{n\times m}$ and $D\in\CC^{m\times n}$. Then the equation $AX+X^\star D=E$ has a unique solution, for any right-hand side $E$, if and only if the matrix pencil $\cP(\la)=A-\la D^\star$ is regular and:
\begin{itemize}
\item If $\star=\top$, $\Lambda(\cP)\setminus\{1\}$ is reciprocal free and $m_{1}(\cP)\leq1$.
\item If $\star=*$, $\Lambda(\cP)$ is $*$-reciprocal free.
\end{itemize}
In particular, if the coefficients are non-square, then the equation cannot have a unique solution, for any right-hand side $E$.
\end{corollary}

A more interesting situation involves the $\star$-Stein equation $AXB+X^\star=E$, for which we get the following result that generalizes Theorem 10 of \cite{di16} (see also the references in \cite{di16}).
\begin{corollary}\label{stein.coro}
Let $A,B\in\CC^{n\times m}$. Then the equation $AXB+X^\star=E$ has a unique solution, for any right-hand side $E$, if and only if the following conditions hold:
\begin{itemize}
\item If $\star=\top$, $\Lambda(AB^\top)\setminus\{1\}$ is reciprocal free and $m_{1}(AB^\top)\leq1$.
\item If $\star=*$, $\Lambda(AB^*)$ is $*$-reciprocal free.
\end{itemize}
\end{corollary}
\begin{proof}
It is sufficient to observe that the condition in Corollary \ref{thm:nxn} (taking $C=I$, $D=I$) is equivalent to the condition stated on the spectrum of $AB^\star$, for each of the cases in Corollary \ref{thm:nxn}. If $m>n$, we are in case (c), with $\mathcal P_3=I-\la B^\star A$. The eigenvalues of $\mathcal P_3$ are the reciprocals of the eigenvalues of $B^\star A$. Note that $B^\star A$ has $m-n$ dimension-induced zero eigenvalues and $\widetilde{\Lambda}(B^\star A) = \Lambda(AB^\star)$ (this equality follows from~\cite[Theorem~1.3.20]{hj}). Hence the set $\Lambda(AB^\star)$ is the reciprocal of $\widehat{\Lambda}(\cP_3)$, so one of the two is ($*$-)reciprocal-free if and only if the other is, while the multiplicity of $1$ is the same in both spectra. 

Similarly, if $m < n$, we can take $\cP_4(\lambda)=I-\la BA^\star$; then $\widehat{\Lambda}(\cP_4)$ is the reciprocal of $\widetilde{\Lambda}(BA^\star)$, or, applying the $\star$ operator, the ($*$-)reciprocal of $\widetilde{\Lambda}(AB^\star)$. Since a matrix never has $\infty$ as an eigenvalue, $\widetilde{\Lambda}(AB^\star)$ is a ($*$-)reciprocal-free set if and only if $\Lambda(AB^\star)$ is so, regardless of the additional zero eigenvalues.

The cases with $m=n$ can be proved in a similar way.
\end{proof}

Comparing Corollaries \ref{sylv.coro} and \ref{stein.coro}, we see an interesting difference between the unique solvability of the $\star$-Sylvester equation $AX+X^\star D=E$ and the $\star$-Stein equation $AXB+X^\star=E$. In the first case, the equation can not have a unique solution, for any right-hand side $E$, unless the coefficient matrices are square. However, the $\star$-Stein equation can have unique solution, for any right-hand side $E$, for rectangular coefficient matrices $A,B\in\CC^{n\times m}$. As an elementary example, consider the matrices $A=\left[\begin{array}{cc}1&1\end{array}\right]$ and $B=\left[\begin{array}{cc}1&2\end{array}\right],$ with $n=1,m=2$. The matrix $AB^\top=3$ satisfies the conditions in the statement of Corollary \ref{stein.coro} (for both $\star=\top,*$), so $AXB+X^\star=E$ has a unique solution, for any right-hand side $E$. Indeed, the equation, in the case $\star=\top$, is
$$
\left[\begin{array}{cc}1&1\end{array}\right]\left[\begin{array}{c}x_1\\x_2\end{array}\right]\left[\begin{array}{cc}1&2\end{array}\right]+
\left[\begin{array}{cc}x_1&x_2\end{array}\right]=\left[\begin{array}{cc}e_1&e_2\end{array}\right],
$$
which has a unique solution for any $e_1,e_2\in\CC$. More precisely, this solution is given by
$$
x_1=\frac{3e_1-e_2}{4},\qquad x_2=\frac{e_2-e_1}{2}.
$$
We leave to the reader to check that the system also has a unique solution, for any $e_1,e_2\in\CC$, in the case $\star=*$.

\section{Explicit characterization for the generalized Sylvester equation} \label{sec:kosir_criteria}

In this section, we provide an explicit solution of problems {\bf SR}, {\bf OR}, and {\bf UR} for Equation~\eqref{eq:gensylvr}. These characterizations follow from the results and lemmas in~\cite{kosir-tech}, but we state them explicitly in Theorem \ref{thm:kosir_characterization}. 
Unlike the characterization given in Theorem \ref{thm:main} of {\bf UR} for Equation~\eqref{gensylv}, the characterizations for Equation \eqref{eq:gensylvr} depend on further constraints on the Kronecker canonical form (KCF) of the matrix pencils $A-\la C$ and $D^\top-\la B^\top$, and not just on their spectrum. Though the KCF is a standard canonical form that can be found in most of the basic references on matrix pencils, we refer the reader to \cite[Th. 2]{ddgmr11}, since we follow the notation in that paper. In particular, $J(\alpha)$ denotes a {\em Jordan block} associated with the eigenvalue $\alpha$, including $\alpha=\infty$ (which is denoted by $N$ in \cite{ddgmr11}), $L_\varepsilon$ denotes a {\em right singular block} of size $\varepsilon\times (\varepsilon+1)$, and $L_\eta^\top$ denotes a {\em left singular block} of size $(\eta+1)\times \eta$. We denote by $\mathbb Z^+$ the set of positive integers.

\begin{theorem} \label{thm:kosir_characterization} Let $A,C\in\CC^{p\times m}$ and $B,D\in\CC^{n\times q}$. 
\begin{description}
	\item[SR]
	Equation~\eqref{eq:gensylvr} has \emph{at least} one solution, for any right-hand side $E$, if and only if the following two conditions are satisfied:
	\begin{itemize}
		\item the (possibly singular) pencils $A-\lambda C$ and $D^\top - \lambda B^\top$ have no common eigenvalues, \emph{and}
		\item if the KCF of either $A-\la C$ or $D^\top-\la B^\top$ contains a block $L_\eta^\top$, then the KCF of the other pencil is a direct sum of blocks $L_{\varepsilon_i}$ with $\varepsilon_i\leq \eta$.  
	\end{itemize}
	\item[OR] Equation~\eqref{eq:gensylvr} has \emph{at most} one solution, for any right-hand side $E$, if and only if the following two conditions are satisfied:
	\begin{itemize}
		\item the (possibly singular) pencils $A-\lambda C$ and $D^\top - \lambda B^\top$ have no common eigenvalues, \emph{and}
		\item if the KCF of either $A-\la C$ or $D^\top-\la B^\top$ contains a block $L_\varepsilon$, then the KCF of the other pencil is a direct sum of blocks $L_{\eta_i}^\top$ with $\eta_i\leq \varepsilon$.  
	\end{itemize}
	\item[UR] Equation~\eqref{eq:gensylvr} has \emph{exactly} one solution, for any right-hand side $E$, if and only if one of the following situations hold:
	\begin{itemize}
		\item the pencils $A-\lambda C$ and $D^\top - \lambda B^\top$ are regular and have no common eigenvalues, \emph{or}
		\item there is some $s \in \mathbb{Z}^+$ such that the KCF of either $A-\la C$ or $B^\top - \la D^\top$ is a direct sum of blocks $L_s$, and the KCF of the other pencil is a direct sum of blocks $L_s^\top$.
	\end{itemize}
\end{description}
\end{theorem}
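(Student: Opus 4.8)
The plan is to reduce the generalized Sylvester equation~\eqref{eq:gensylvr} to a system over $\mathbb{R}$ or $\mathbb{C}$ and then characterize the answers to \textbf{SR}, \textbf{OR}, and \textbf{UR} in terms of the solution-space structure already worked out by Ko\v{s}ir in~\cite{kosir-tech}. The starting point is the observation from Section~\ref{size-sec} that, after vectorization, \eqref{eq:gensylvr} becomes $Mx=e$ with $M=B^\top\otimes A-D^\top\otimes C$; question \textbf{SR} asks for full row rank of $M$, question \textbf{OR} asks for full column rank (equivalently, that the homogeneous equation has only the trivial solution), and \textbf{UR} combines both, forcing $M$ square. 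The whole point of the theorem, however, is to avoid the large Kronecker matrix $M$ and instead read off the answer from the Kronecker canonical forms (KCF) of the two small pencils $A-\la C$ and $D^\top-\la B^\top$, so the real content is translating ``$\dim(\ker M)$'' and ``$\operatorname{codim}(\operatorname{im} M)$'' into block-counting data of these KCFs.

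\medskip

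\noindent\textbf{First} I would invoke the explicit description of the solution space of \eqref{eq:gensylvr} from~\cite{kosir-tech}. The homogeneous solution space decomposes according to the interaction of each block of the KCF of $A-\la C$ with each block of the KCF of $D^\top-\la B^\top$. The key qualitative facts I would extract are: (i) a Jordan block $J(\alpha)$ in the first pencil and a Jordan block $J(\beta)$ in the second contribute to the kernel precisely when $\alpha=\beta$ (a common eigenvalue), which is why the ``no common eigenvalues'' condition appears in every part; (ii) singular blocks contribute independently of eigenvalue matching, and the sign of their contribution to kernel versus cokernel is governed by whether the block is a right block $L_\varepsilon$ (more columns than rows) or a left block $L_\eta^\top$ (more rows than columns). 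The dimension counts for each pairing are exactly the lemmas in~\cite{kosir-tech}, which I would cite rather than rederive.

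\medskip

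\noindent\textbf{Next}, for each of the three parts I would assemble the conditions. For \textbf{OR} (full column rank, i.e.\ trivial kernel) I would argue that the kernel vanishes iff no regular-part contribution survives (forcing no common eigenvalues) \emph{and} no singular-block pairing contributes; the combinatorial lemma in~\cite{kosir-tech} shows that an $L_\varepsilon$ block in one pencil paired against the left blocks $L_{\eta_i}^\top$ of the other contributes nothing to the kernel exactly when every $\eta_i\le\varepsilon$, which yields the stated second bullet. For \textbf{SR} (full row rank, trivial cokernel) I would run the dual argument: the cokernel of $M$ is the kernel of $M^\star$, which corresponds to the ``transposed'' equation, so the roles of left and right singular blocks are interchanged, producing the condition that an $L_\eta^\top$ in one pencil forces $\varepsilon_i\le\eta$ for the right blocks of the other. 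For \textbf{UR} I would intersect \textbf{SR} and \textbf{OR}: requiring both column- and row-fullness simultaneously forces $M$ to be square, which by a dimension count rules out mixing Jordan parts with singular parts. This leaves only two consistent scenarios---either both pencils are regular (no singular blocks at all, giving the first bullet), or all blocks are singular and the $L_s/L_s^\top$ sizes and counts must match exactly so that the right-block and left-block dimension contributions cancel (the second bullet).

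\medskip

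\noindent\textbf{The main obstacle} I anticipate is the careful bookkeeping of the singular-block dimension counts: extracting from the lemmas of~\cite{kosir-tech} the precise formula for how an $L_\varepsilon$--$L_\eta^\top$ pair contributes to kernel and cokernel, and then verifying that the threshold inequalities ($\eta_i\le\varepsilon$ for \textbf{OR}, $\varepsilon_i\le\eta$ for \textbf{SR}) are exactly the conditions under which the relevant contribution is zero. In particular, the delicate point in \textbf{UR} is showing that squareness of $M$ together with both rank conditions is \emph{incompatible} with having even a single Jordan block coexisting with singular blocks, so that the two clean alternatives in the statement are genuinely exhaustive; this is a finite but somewhat intricate case analysis on the block list, and getting the boundary cases (equal sizes, single blocks) right is where the argument needs the most attention.
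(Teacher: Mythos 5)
Your proposal is correct and follows essentially the same route as the paper: reduce both pencils to Kronecker canonical form, observe that the equation decouples into independent subequations indexed by pairs of blocks, and read off the \textbf{SR}/\textbf{OR}/\textbf{UR} conditions from Ko\v{s}ir's per-pair lemmas (the paper tabulates these in Table~\ref{tbl:kronblocks} and works with the decoupled matrix equations rather than with $\ker M$ and $\operatorname{coker} M$, but this is only a change of language). One small correction: in \textbf{UR} the incompatibility of a Jordan block coexisting with singular blocks is not a consequence of a dimension count forcing $M$ square --- it follows directly from the table, since $J(\alpha)$ paired with an $L_\varepsilon$ violates \textbf{OR} and paired with an $L_\eta^\top$ violates \textbf{SR}, so intersecting the two conditions already yields the two alternatives in the statement.
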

\begin{proof}
Let $P_1(A-\lambda C)Q_1=\widehat{A} - \lambda\widehat{C}$ and $P_2^\top(D^\top-\lambda B^\top)Q_2^\top=\widehat{D}^\top - \lambda\widehat{B}^\top$ be the KCFs of $A-\la C$ and $D^\top-\la B^\top$, respectively. Equation \eqref{eq:gensylvr} is equivalent to
\begin{equation} \label{hatsylv}
	\widehat{A}\widehat{X}\widehat{B} - \widehat{C}\widehat{X}\widehat{D} = \widehat{E}, \quad \text{with }\widehat{X} = Q_1^{-1}XQ_2^{-1}, \quad \widehat{E} = P_1EP_2.	
\end{equation}
Partitioning the matrices conformably with the (possibly rectangular) blocks in the KCFs, we get
\begin{align*}
\begin{bmatrix}
  \widehat{A}_{11}\\
  & \widehat{A}_{22}\\
  && \ddots \\
  &&& \widehat{A}_{pp}
\end{bmatrix}
\begin{bmatrix}
  \widehat{X}_{11} &\widehat{X}_{12} & \dots & \widehat{X}_{1q}\\
  \widehat{X}_{21} &\widehat{X}_{22} & \dots & \widehat{X}_{2q}\\
  \vdots & \vdots & \ddots & \vdots \\
  \widehat{X}_{p1} &\widehat{X}_{p2} & \dots & \widehat{X}_{pq}\\  
\end{bmatrix}
\begin{bmatrix}
  \widehat{B}_{11}\\
  & \widehat{B}_{22}\\
  && \ddots \\
  &&& \widehat{B}_{qq}
\end{bmatrix}\\
-
\begin{bmatrix}
  \widehat{C}_{11}\\
  & \widehat{C}_{22}\\
  && \ddots \\
  &&& \widehat{C}_{pp}
\end{bmatrix}
\begin{bmatrix}
  \widehat{X}_{11} &\widehat{X}_{12} & \dots & \widehat{X}_{1q}\\
  \widehat{X}_{21} &\widehat{X}_{22} & \dots & \widehat{X}_{2q}\\
  \vdots & \vdots & \ddots & \vdots \\
  \widehat{X}_{p1} &\widehat{X}_{p2} & \dots & \widehat{X}_{pq}\\  
\end{bmatrix}
\begin{bmatrix}
  \widehat{D}_{11}\\
  & \widehat{D}_{22}\\
  && \ddots \\
  &&& \widehat{D}_{qq}
\end{bmatrix}\\
=
\begin{bmatrix}
  \widehat{E}_{11} &\widehat{E}_{12} & \dots & \widehat{E}_{1q}\\
  \widehat{E}_{21} &\widehat{E}_{22} & \dots & \widehat{E}_{2q}\\
  \vdots & \vdots & \ddots & \vdots \\
  \widehat{E}_{p1} &\widehat{E}_{p2} & \dots & \widehat{E}_{pq}\\  
\end{bmatrix},
\end{align*}
and~\eqref{hatsylv} is equivalent to the system of $pq$ independent equations
\begin{equation} \label{smallsystems}
	\widehat{A}_{ii}\widehat{X}_{ij}\widehat{B}_{jj} - \widehat{C}_{ii}\widehat{X}_{ij}\widehat{D}_{jj} = \widehat{E}_{ij}, \quad i=1,2,\dots,p, \, j=1,2,\dots,q.	
\end{equation}
In particular, the existence (resp. uniqueness) of solution $X$ of \eqref{eq:gensylvr}, for any right-hand side $E$, is equivalent to the simultaneous existence (resp. uniqueness) of the solution $\widehat{X}_{ij}$ of \eqref{smallsystems}, for any right-hand side $\widehat{E}_{ij}$, and for any $i=1,2,\hdots,p,j=1,2,\hdots,q$.

Solvability and uniqueness conditions for the systems~\eqref{smallsystems}, where $\widehat{A}_{ii}-\lambda \widehat{C}_{ii}$ and $\widehat{D}_{jj}^\top-\lambda \widehat{B}_{jj}^\top$ are blocks from the KCF, are provided in~\cite{kosir-tech}. We recall them in Table~\ref{tbl:kronblocks}.
\begin{table}
\begin{tabular}{ccp{3.5cm}p{3.5cm}}
\toprule
$A-\lambda C$ & $D^\top - \lambda B^\top$ & At least one solution? & At most one solution?\\
\midrule
$J(\alpha)$ & $J(\alpha)$ & No & No\\
$J(\alpha)$ & $J(\beta)$  ($\beta\neq \alpha$) & Yes & Yes\\
$L_\eta^\top$& $L_\varepsilon$ & Only when $\eta\geq \varepsilon$ &Only when $\eta\leq \varepsilon$\\
$L_\varepsilon$& $L_\eta^\top$ & Only when $\eta\geq \varepsilon$ & Only when $\eta\leq \varepsilon$\\
$L_{\eta_1}^\top$ & $L_{\eta_2}^\top$ & No& Yes\\
$L_{\varepsilon_1}$ &$L_{\varepsilon_2}$& Yes & No\\
$J(\alpha)$&$L_\varepsilon$  & Yes & No \\
$J(\alpha)$ & $L_\eta^\top$  & No & Yes \\
$L_\eta^\top$  &$J(\alpha)$& No & Yes\\
$L_\varepsilon$ & $J(\alpha)$  & Yes & No \\
\bottomrule
\end{tabular}
\caption{\footnotesize Summary of the results on existence and uniqueness of solutions for the equation $AXB-CXD=E$ when $A-\lambda C$ and $D^\top - \lambda B^\top$ are Kronecker blocks, obtained from the lemmas in~\cite[Sec.~4 and~5]{kosir-tech}. The Jordan blocks include $\alpha=\infty$.
} \label{tbl:kronblocks}
\end{table}
One can check, using this table, that each equation of the system~\eqref{smallsystems} has at least one solution if and only if the \textbf{SR} conditions in the statement of the theorem hold; similarly, each equation has at most one solution if and only if the~\textbf{OR} conditions hold, and it has exactly one solution if and only if the~\textbf{UR} conditions hold.
\end{proof}

Theorem \ref{thm:kosir_characterization} shows that the characterization of \textbf{SR}, \textbf{OR}, and \textbf{UR} for the generalized Sylvester equation \eqref{eq:gensylvr} depends on the KCF of the pencils $A-\la C$ and $D^\top-\la B^\top$. It is natural to expect a characterization of \textbf{UR} for the generalized $\star$-Sylvester equation \eqref{gensylv} depending also on the KCF of the pencil ${\cal Q}(\la)$ in Theorem \ref{thm:main}. However, the example provided in Section \ref{sec:counterexample} shows that this is not the case. This example presents two different equations with different behavior (Equation \eqref{eq2} has a unique solution, for any right-hand side, whereas Equation \eqref{eq1} has not). However, the associated pencils, ${\cal Q}_2(\la)$ and ${\cal Q}_1(\la)$, respectively, have the same KCF. 

\subsection{An alternative characterization of UR}

The criterion for unique solvability for each right-hand side (\textbf{UR}) for the generalized Sylvester equation~\eqref{eq:gensylvr} gives a peculiar restriction that can be expressed in terms of the ratios between the two dimensions of the involved pencils.

\begin{corollary}\label{cor_s}
Let $A,C\in\mathbb{C}^{p\times m}$ and $B,D\in\mathbb{C}^{n\times q}$. If the generalized Sylvester equation~\eqref{eq:gensylvr} has a unique solution for any right-hand side, then $p/m=n/q=d$, for some
\[
d \in \{1 \} \cup \left\{\frac{s}{s+1} : s \in\mathbb{Z}^+\right\} \cup \left\{\frac{s+1}{s} : s \in\mathbb{Z}^+\right\}.
\]
\end{corollary}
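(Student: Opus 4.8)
The plan is to read off the conclusion directly from the \textbf{UR} characterization in Theorem~\ref{thm:kosir_characterization}, which splits unique solvability for any right-hand side into exactly two scenarios. In both scenarios the dimensions $p,m,n,q$ are rigidly constrained by the structure of the Kronecker canonical forms of the two pencils $A-\la C$ (of size $p\times m$) and $D^\top-\la B^\top$ (of size $q\times n$), and the whole content of the corollary is to extract the resulting size ratios. Since strict equivalence preserves the size of a pencil, I can add up the sizes of the blocks in the KCF to recover $p\times m$ and $q\times n$ respectively, and this is the only mechanism used.

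First I would treat the regular alternative. If $A-\la C$ and $D^\top-\la B^\top$ are both regular, then in particular each is square; recalling that $A-\la C$ has size $p\times m$ and $D^\top-\la B^\top$ has size $q\times n$, this forces $p=m$ and $q=n$, hence $p/m=n/q=1$, i.e.\ $d=1$. Next I would treat the singular alternative, where for some $s\in\mathbb{Z}^+$ one of $A-\la C$, $B^\top-\la D^\top$ has KCF a direct sum of blocks $L_s$ and the other a direct sum of blocks $L_s^\top$. Recalling from the paper's conventions that $L_s$ has size $s\times(s+1)$ and $L_s^\top$ has size $(s+1)\times s$: if $A-\la C$ is a direct sum of $k$ copies of $L_s$, then $p=ks$ and $m=k(s+1)$, so $p/m=s/(s+1)$, while $B^\top-\la D^\top$ (read as a $q\times n$ pencil) being a direct sum of $\ell$ copies of $L_s^\top$ gives $q=\ell(s+1)$ and $n=\ell s$, so $n/q=s/(s+1)$ as well; thus $d=s/(s+1)$. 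The opposite assignment ($L_s^\top$ blocks for $A-\la C$ and $L_s$ blocks for $B^\top-\la D^\top$) is entirely symmetric and yields $p/m=n/q=(s+1)/s$, so $d=(s+1)/s$. Collecting the admissible values from the two alternatives gives precisely the stated set.

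The computation is routine once the case split from Theorem~\ref{thm:kosir_characterization} is in hand, so I expect no genuine obstacle. The one point that needs care is bookkeeping of which pencil carries which family of blocks and in which size convention, in particular the switch between $D^\top-\la B^\top$ appearing in the regular alternative and $B^\top-\la D^\top$ appearing in the singular alternative, so that taking the transpose does not silently swap the roles of $n$ and $q$. The fact that the two ratios $p/m$ and $n/q$ are forced to coincide is not an extra hypothesis but falls out automatically, because a single index $s$ governs both pencils simultaneously in each alternative.
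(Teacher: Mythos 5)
Your proposal is correct and matches the paper's (implicit) argument: the corollary is stated without a separate proof, but the intended mechanism is exactly the block-size bookkeeping you carry out, which the authors spell out later in the proof of their alternative \textbf{UR} characterization (a direct sum of $k$ blocks $L_s$, resp.\ $L_s^\top$, forces dimensions $ks\times k(s+1)$, resp.\ $k(s+1)\times ks$). Your handling of the regular case and of the $q\times n$ orientation of $B^\top-\la D^\top$ is also correct, so there is nothing to add.
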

\begin{proof}
By Theorem \ref{thm:kosir_characterization}, there are two possibilities in order for \eqref{eq:gensylvr} to have a unique solution, for any right-hand side: either the pencils $A-\la C$ and $D^\top-\la B^\top$ are regular with no common eigenvalues or one of the two pencils is a direct sum of blocks $L_s$ and the other is a direct sum of blocks $L_s^\top$. In the former case, $A$ and $B$ must be square ($p=m$ and $n=q$) and then $d=1$ in the statement. In the latter case, if $A-\la C$ is a direct sum of $k$ right singular blocks $L_s$ and $D^\top-\la B^\top$ is a direct sum of $\ell$ left singular blocks $L_s^\top$, then $A$ has size $(ks)\times (k(s+1))$, while $B$ has size $(\ell s)\times (\ell(s+1))$, and then $d=p/m=n/q=s/(s+1)$, while if $A-\la C$ is a direct sum of $k$ left singular blocks $L_s^\top$ and $D^\top-\la B^\top$ is a direct sum of $\ell$ right singular blocks $L_s^\top$, we have $d=(s+1)/s$.
\end{proof}

The necessary condition given in Corollary \ref{cor_s} is not sufficient. In order to give a sufficient condition, we need the following result. 

\begin{lemma}\label{lem_acmatrix}
Let $A,C\in\mathbb{R}^{p\times m}$, and $\varepsilon\in\mathbb{Z}^+$. Then, the KCF of the pencil $A-\lambda C$ consists only of blocks of the form 
$L_\varepsilon$ if and only if the $p(\varepsilon+1) \times m\varepsilon$  matrix
\[
M_\varepsilon(A,C) :=
	\begin{bmatrix}
	  A \\
	  C & A \\
	  & C & \ddots\\
	  & & \ddots & A\\
	  & & & C
	\end{bmatrix}
\] is square and invertible. Similarly, the KCF of $A-\lambda C$ is composed uniquely of blocks of the form $L_\eta^\top$ if and only if $M_\eta(A^\top,C^\top)$ is square and invertible.
\end{lemma}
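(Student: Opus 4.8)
The plan is to reduce the assertion ``$M_\varepsilon(A,C)$ is square and invertible'' to a condition on the \emph{right minimal indices} of the pencil $A-\lambda C$, and then read the whole KCF off from a dimension count. The starting point is an explicit description of the kernel. Writing a vector of $\mathbb{R}^{m\varepsilon}$ as a stack $x=(x_1,\dots,x_\varepsilon)$ with $x_i\in\mathbb{R}^m$, the block structure of $M_\varepsilon(A,C)$ gives
\[
M_\varepsilon(A,C)\,x=0 \iff Ax_1=0,\quad Cx_{i-1}+Ax_i=0\ (2\le i\le\varepsilon),\quad Cx_\varepsilon=0.
\]
After the alternating sign change $x_i\mapsto(-1)^{i-1}x_i$, these are precisely the equations stating that the polynomial $v(\lambda)=\sum_{i=1}^{\varepsilon}(-1)^{i-1}x_i\lambda^{i-1}$, of degree at most $\varepsilon-1$, lies in the polynomial right null space of $A-\lambda C$. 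Hence $x\mapsto v$ is a linear bijection from $\ker M_\varepsilon(A,C)$ onto the space of right null-space polynomials of $A-\lambda C$ of degree $\le\varepsilon-1$. Since a square matrix is invertible exactly when its kernel is trivial, and since the smallest degree of a nonzero right null vector equals the smallest right minimal index (realized by a minimal polynomial basis), I obtain the working criterion: whenever $M_\varepsilon(A,C)$ is square, it is invertible if and only if every right singular block $L_{\varepsilon_i}$ of $A-\lambda C$ satisfies $\varepsilon_i\ge\varepsilon$ (vacuously true if there are none). I would isolate this correspondence as the first step, citing standard minimal-index theory and noting that the singular structure is the same over $\mathbb{R}$ and $\mathbb{C}$.

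The substantive direction is ``only if''. Assume $M_\varepsilon(A,C)$ is square and invertible. Squareness reads $p(\varepsilon+1)=m\varepsilon$, i.e.\ $p=(m-p)\varepsilon$. Record the KCF data of $A-\lambda C$: right singular blocks $L_{\varepsilon_1},\dots,L_{\varepsilon_s}$, left singular blocks $L_{\eta_1}^\top,\dots,L_{\eta_t}^\top$, and a regular part of size $r\times r$. Counting rows and columns gives $p=\sum_i\varepsilon_i+\sum_j(\eta_j+1)+r$ and $m=\sum_i(\varepsilon_i+1)+\sum_j\eta_j+r$, so that $m-p=s-t$ and the squareness identity becomes
\[
\sum_i\varepsilon_i+\sum_j\eta_j+t+r=(s-t)\varepsilon.
\]
By the criterion above, invertibility forces $\varepsilon_i\ge\varepsilon$ for every right index, hence $\sum_i\varepsilon_i\ge s\varepsilon$. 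Substituting, $(s-t)\varepsilon=\sum_i\varepsilon_i+\sum_j\eta_j+t+r\ge\sum_i\varepsilon_i\ge s\varepsilon$, which yields $-t\varepsilon\ge0$, i.e.\ $t=0$; then $s\varepsilon=\sum_i\varepsilon_i+r\ge\sum_i\varepsilon_i\ge s\varepsilon$ forces $r=0$ together with $\sum_i\varepsilon_i=s\varepsilon$, so every $\varepsilon_i=\varepsilon$. Thus the KCF is a direct sum of $L_\varepsilon$ blocks. (The degenerate case $s=0$ collapses to $p=m=0$ and is vacuous.) The mildly surprising point here is that squareness together with ``no right index below $\varepsilon$'' already annihilates the left singular and regular parts.

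For the ``if'' direction, if the KCF consists of $k$ copies of $L_\varepsilon$ only, then $A-\lambda C$ has size $k\varepsilon\times k(\varepsilon+1)$, so $M_\varepsilon(A,C)$ has size $k\varepsilon(\varepsilon+1)\times k\varepsilon(\varepsilon+1)$, i.e.\ it is square; as every right minimal index is $\varepsilon$ there is no null vector of degree $\le\varepsilon-1$, so the kernel is trivial and the matrix is invertible. The ``similarly'' statement then follows by duality: transposing the pencil interchanges right and left singular blocks and preserves the regular part, so $A-\lambda C$ has only $L_\eta^\top$ blocks if and only if $A^\top-\lambda C^\top$ has only $L_\eta$ blocks; applying the first part to $A^\top,C^\top\in\mathbb{R}^{m\times p}$ gives the equivalence with invertibility of $M_\eta(A^\top,C^\top)$. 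The main obstacle I anticipate is not any individual step but making the kernel/null-space correspondence fully rigorous, in particular verifying that a polynomial right null vector of degree $\le\varepsilon-1$ exists exactly when the smallest right minimal index is $\le\varepsilon-1$; once this is secured, the remainder is the elementary counting argument above.
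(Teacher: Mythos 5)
Your proposal is correct and follows essentially the same route as the paper's proof: both hinge on the correspondence between $\ker M_\varepsilon(A,C)$ and polynomial right null vectors of $A-\lambda C$ of degree less than $\varepsilon$ (which the paper obtains by citing Gantmacher, and you derive explicitly), followed by a row/column count that forces every Kronecker block to be $L_\varepsilon$. The only difference is presentational --- you carry out the count globally over the block data $(\varepsilon_i,\eta_j,r)$ where the paper argues per block via the ratio $m_i/p_i\le(\varepsilon+1)/\varepsilon$ --- so no further comparison is needed.
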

\begin{proof}
Let us first suppose that $M_\varepsilon(A,C)$ is square and invertible. Let $p_1\times m_1,p_2\times m_2,\dots,p_k\times m_k$ be the sizes of the blocks in the KCF of $A-\lambda C$. It follows from the arguments in~\cite[Section XII.3]{gantmacher} that $M_\varepsilon(A,C)$ has a nontrivial kernel if and only if there is a polynomial vector $x(\lambda)$ of degree strictly smaller than $\varepsilon$ such that $(A-\lambda C)x(\lambda)=0$, or, equivalently, if and only if the KCF of $A-\lambda C$ contains a block $L_{\varepsilon_2}$ with $\varepsilon_2<\varepsilon$. Hence, if $M_\varepsilon(A,C)$ is invertible, then the KCF of $A-\lambda C$ contains only Jordan blocks, together with blocks $L_\eta^\top$, and blocks $L_{\varepsilon_2}$ with $\varepsilon_2\geq \varepsilon$. For each of these blocks, one can check that $m_i/p_i \leq (\varepsilon+1)/\varepsilon$, for $i=1,\hdots,k$, and the equality holds only for blocks of type $L_\varepsilon$. In particular, we have
\[
p(\varepsilon+1) = \sum_{i=1}^k p_i(\varepsilon+1) \geq \sum_{i=1}^k m_i\varepsilon = m\varepsilon.
\]
Since $M_\varepsilon(A,C)$ is square, equality must hold for all $i=1,\hdots,k$, which means that all blocks are of type $L_\varepsilon$.

Now let us prove the other direction: suppose that the KCF of $A-\lambda C$ consists only of blocks $L_\varepsilon$. Then, $p=k\varepsilon$ and $m=k(\varepsilon+1)$, so $p(\varepsilon+1)=m\varepsilon$, and $M_\varepsilon(A,C)$ is square. Moreover, again by the same reasonings in~\cite[Section XII.3]{gantmacher} as above, $M_\varepsilon(A,C)$ has trivial kernel, so it is invertible.

The second claim in the statement follows by applying the first statement to $A^\top - \lambda C^\top$.
\end{proof}

Using Lemma \ref{lem_acmatrix} we can give an alternative version of the \textbf{UR} characterization in Theorem~\ref{thm:kosir_characterization} that does not involve the KCF explicitly. 

\begin{theorem}
Let $A,C\in\CC^{p\times m}$ and $B,D\in\CC^{n\times q}$. Equation~\eqref{eq:gensylvr} has \emph{exactly} one solution, for any right-hand side $E$, if and only if one of the following situations hold:
	\begin{itemize}
		\item $p=m,q=n$, the pencils $A-\lambda C$ and $D^\top - \lambda B^\top$ are regular and have no common eigenvalues, \emph{or}
		\item $p<m,n<q$, $s=p/(m-p)=n/(q-n)$ is a positive integer, and the square matrices $M_s(A,C)$ and $M_s(B,D)$ are both invertible, \emph{or}
		\item $p>m,n>q$, $s=m/(p-m)=q/(n-q)$ is a positive integer, and the square matrices $M_s(A^\top,C^\top)$ and $M_s(B^\top,D^\top)$ are both invertible.
	\end{itemize}
\end{theorem}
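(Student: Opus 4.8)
The plan is to derive this theorem directly from the \textbf{UR} characterization in Theorem~\ref{thm:kosir_characterization}, translating each of its two alternatives into the dimension-and-invertibility language supplied by Lemma~\ref{lem_acmatrix}. The first alternative there requires $A-\la C$ and $D^\top-\la B^\top$ to be regular with no common eigenvalue. Regularity forces a pencil to be square, so $A-\la C$ regular gives $p=m$, and $D^\top-\la B^\top$ regular (with $B,D\in\CC^{n\times q}$, hence $B^\top,D^\top\in\CC^{q\times n}$) gives $q=n$. This reproduces the first bullet of the statement verbatim, and the converse is immediate.

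The substance lies in the second alternative, which asserts the existence of $s\in\mathbb{Z}^+$ for which one of $A-\la C$, $B^\top-\la D^\top$ has KCF a direct sum of $L_s$ blocks while the other has KCF a direct sum of $L_s^\top$ blocks. I would split this according to which pencil carries the $L_s$ blocks. In the subcase where $A-\la C$ is a sum of $L_s$ blocks, the first claim of Lemma~\ref{lem_acmatrix} says this is equivalent to $M_s(A,C)$ being square and invertible; since an $L_s$ block has size $s\times(s+1)$, a sum of $k$ of them forces $p=ks$, $m=k(s+1)$, and the squareness relation $p(s+1)=ms$ rearranges to $s=p/(m-p)$ with $p<m$. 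Simultaneously, $B^\top-\la D^\top$ is a sum of $L_s^\top$ blocks, which by the second claim of Lemma~\ref{lem_acmatrix} applied to the pencil $B^\top-\la D^\top$ is equivalent to $M_s\bigl((B^\top)^\top,(D^\top)^\top\bigr)=M_s(B,D)$ being square and invertible; counting dimensions of the $(s+1)\times s$ blocks yields $n<q$ and $s=n/(q-n)$. Assembling these gives exactly the second bullet.

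The third bullet comes from the complementary subcase, where $A-\la C$ is a sum of $L_s^\top$ blocks and $B^\top-\la D^\top$ is a sum of $L_s$ blocks. The second claim of Lemma~\ref{lem_acmatrix} turns the condition on $A-\la C$ into invertibility of $M_s(A^\top,C^\top)$ with $s=m/(p-m)$ and $p>m$, while the first claim applied to $B^\top-\la D^\top$ turns the condition on that pencil into invertibility of $M_s(B^\top,D^\top)$ with $s=q/(n-q)$ and $n>q$. The single integer $s=m/(p-m)=q/(n-q)$ then records that a common block size works for both pencils, as Theorem~\ref{thm:kosir_characterization} demands.

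I expect the only delicate points to be bookkeeping: keeping the transpositions straight when invoking the two claims of Lemma~\ref{lem_acmatrix} on the pencil $B^\top-\la D^\top$ (so that $M_s(B,D)$, not $M_s(B^\top,D^\top)$, appears in the second bullet, and vice versa in the third), and verifying that the ``square'' requirement on each block-Toeplitz matrix is precisely the stated ratio condition $s=p/(m-p)$, $s=n/(q-n)$, and so on. Once these identifications are checked, equivalence with Theorem~\ref{thm:kosir_characterization} follows, since the three bullets are governed by $p=m$, $p<m$, $p>m$ respectively, hence exhaust and do not overlap the two alternatives there.
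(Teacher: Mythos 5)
Your proposal is correct and follows essentially the same route as the paper: the paper likewise obtains the result as a direct consequence of Theorem~\ref{thm:kosir_characterization} (part \textbf{UR}) and Lemma~\ref{lem_acmatrix}, with the same dimension bookkeeping identifying a direct sum of $k$ blocks $L_s$ (resp.\ $L_s^\top$) with the conditions $p<m$, $s=p/(m-p)$ (resp.\ $p>m$, $s=m/(p-m)$). Your transposition bookkeeping for $M_s(B,D)$ versus $M_s(B^\top,D^\top)$ is also consistent with the statement.
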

\begin{proof}
The proof is an immediate consequence of Theorem \ref{thm:kosir_characterization}, part {\bf UR}, and Lemma \ref{lem_acmatrix}, just taking into account that if the KCF of an $m\times n$ pencil (respectively, the KCF of an $n\times m$ pencil) is a direct sum of $k$ blocks $L_s$ (resp., $L_s^\top)$, for some fixed $s$, then, as we have seen in the proof of Lemma \ref{lem_acmatrix}, it must be $p=ks, m=k(s+1)$ (resp., $p=k(s+1),m=ks)$, which implies $m>p$ and $s=p/(m-p)$ (resp., $m<p$ and $s=m/(p-m)$).
\end{proof}

\section{Conclusions and open problems}

We have provided necessary and sufficient conditions for the generalized $\star$-Sylvester equation \eqref{gensylv} to have a unique solution for any right-hand side $E$ ({\bf UR}). In particular, the coefficient matrix of the associated linear system must be square, which is equivalent to the condition $mn=pq$, and the problem becomes equivalent to characterizing the uniqueness of solution of the homogeneous equation \eqref{hom}. The characterization that we have obtained extends the recent one in \cite{di16} for the case of square coefficients. We have also reviewed the solution of problems {\bf SR} (solvability for any right-hand side), {\bf OR} (at most one solution for any right-hand side), and {\bf UR} (unique solvability for any right-hand side) for the generalized Sylvester equation \eqref{eq:gensylvr}. 

It is interesting to compare the conditions for unique solvability (\textbf{UR}) for the two equations~\eqref{gensylv} and~\eqref{eq:gensylvr}, given in Theorems~\ref{thm:main} and~\ref{thm:kosir_characterization} since, in the case of rectangular coefficients, there are more significant differences than those in the case of square coefficients. For the generalized Sylvester equation \eqref{eq:gensylvr}, the only additional case with unique solution is when the KCF of the associated pencils $A-\la C$ and $D^\top-\la B^\top$ contains only certain singular blocks; for the generalized $\star$-Sylvester equation, the spectral properties and Kronecker invariants are not sufficient to determine the answer, and it is necessary to check the invertibility of one of the coefficients.

To our knowledge, small-pencil characterizations for questions~\textbf{US}, \textbf{SR}, and~\textbf{OR} for the generalized $\star$-Sylvester equation are still not present in the literature and arise as a natural open problem to approach in the future.

\bigskip

\noindent{\bf Acknowledgments.} We wish to thank Emre Mengi for pointing out reference \cite{kmnt14} and, as a consequence, references \cite{kosir,kosir-tech} as well. We also thank two anonymous referees for valuable comments that allowed to improve the original version of the paper, as well as one of the referees for pointing out reference \cite{rozsa}.

\bibliographystyle{abbrv}
\bibliography{rectangular}

\end{document}